
\documentclass[final,3p]{elsarticle}
\usepackage{eurosym}
\usepackage{amsfonts}
\usepackage{amsmath}
\usepackage{amssymb}
\usepackage{graphicx}
\usepackage[tableposition=top]{caption}
\usepackage{subcaption}
\usepackage{float}
\usepackage{setspace}
\usepackage{placeins}
\usepackage{mathrsfs}
\usepackage{color}
\usepackage{booktabs}
\usepackage{hyperref}

\setcounter{MaxMatrixCols}{10}

\newsavebox \foobox
\newlength{\foodim}

\graphicspath{{C:/Figures/}}
\newtheorem{theorem}{Theorem}
\newtheorem{proof}{Proof}

\newtheorem{lemma}{Lemma}

\newtheorem{proposition}{Proposition}
\newtheorem{remark}{Remark}

\journal{...........}

\begin{document}

\begin{frontmatter}
\title{Global and local asymptotic stability of an epidemic reaction-diffusion model with a nonlinear incidence}
\author{Lamia Djebara $^{a}$, Redouane Douaifia $^{b}$, Salem Abdelmalek $^{c}$, Samir Bendoukha $^{d,*}$}
\address{(a)Department of Mathematics and Computer Science, ICOSI Laboratory, University of Khenchela,Khenchela, 04000, Algeria.\\
(b)Laboratory of Mathematics, Informatics and Systems (LAMIS), Larbi Tebessi University - Tebessa, Algeria\\
(c)Department of Mathematics and Computer Science, Larbi Tebessi University - Tebessa, Algeria\\
(d)Department of Electrical Engineering, College of Engineering, Yanbu, Taibah University, Saudi Arabia.\\
(*)Corresponding author, email: sbendoukha@taibahu.edu.sa}
\begin{abstract}
The aim of this paper is to study the dynamics of a reaction--diffusion SIS (susceptible-infectious-susceptible) epidemic model  with a nonlinear incidence rate describing the transmission of a communicable disease between individuals. We prove that the proposed model has two steady states under one condition. By analyzing the eigenvalues and using the Routh--Hurwitz criterion and an appropriately constructed Lyapunov functional, we establish the local and global asymptotic stability of the non negative constant steady states subject to the basic reproduction number being greater than unity and of the disease--free equilibrium subject to the basic reproduction number being smaller than or equal to unity in ODE case. By applying an appropriately constructed Lyapunov functional, we identify the condition of the global stability in the PDE case. Finally, we present some numerical examples illustrating and confirming the analytical results obtained throughout the paper.

\end{abstract}

\end{frontmatter}


\section{Introduction\label{SecModel}}

In this manuscript, we consider the reaction--diffusion epidemic phenomenon
proposed in \cite{Djebara2019}, which is an extended version of the SIS
epidemic model with the nonlinear incidence $u\varphi \left( v\right) $. The
system is described as%
\begin{equation}
\left \{
\begin{array}{c}
\dfrac{\partial u}{\partial t}-d_{1}\Delta u=\Lambda -\mu u-\lambda u\varphi
\left( v\right) :=F\left( u,v\right) \text{ \  \  \ in }\left( 0,\infty
\right) \times \Omega , \\
\dfrac{\partial v}{\partial t}-d_{2}\Delta v=-\sigma v+\lambda u\varphi
\left( v\right) :=G\left( u,v\right) \text{ \  \  \  \  \  \ in }\left( 0,\infty
\right) \times \Omega .%
\end{array}%
\right.   \label{e1.1}
\end{equation}%
Throughout this paper, the notation $\Delta $ denotes the Laplacian
operators on $\Omega $, where $\Omega $ is an open bounded subset of $%
\mathbb{R}^{n}$ with smooth boundary $\partial \Omega $. The constant
parameters $d_{1},d_{2}\geq 0$ are the diffusion coefficients. We assume the
initial conditions
\begin{equation}
u_{0}(x)=u(x,0),\ v_{0}(x)=v(x,0)\text{\  \  \  \  \  \  \  \ in }\Omega ,
\label{e1.2}
\end{equation}%
where $u_{0},v_{0}\in C(\overline{\Omega })$, and impose homogeneous Neumann
boundary conditions%
\begin{equation}
\frac{\partial u}{\partial \nu }=\frac{\partial v}{\partial \nu }=0\text{\  \
\  \  \  \  \  \  \  \  \  \  \  \ on }(0,\infty )\times \partial \Omega ,  \label{e1.3}
\end{equation}%
with $\nu $ being the unit outer normal to $\partial \Omega $. We will also
assume that the initial conditions $u_{0}(x),v_{0}(x)\in \mathbb{R}_{\geq 0}$%
.

The considered system (\ref{e1.1})--(\ref{e1.3}) may describe the
transmission of a communicable disease between individuals such as HIV/AIDS.
The population assumed in the model is divided into two classes, susceptible
and infective \cite{hama}. Functions $u(x,t)$ and $v(x,t)$\ denote the non
dimensional population densities of the susceptible and infective
individuals at location $x$ and time $t$, respectively. The constant
parameter $\Lambda >0$\ represents the average growth of different
categories of susceptibles whether through birth or migration, sometimes
referred to as the recruitment rate of the population. In addition, $\mu $
denotes the natural death rate, $\lambda $ is the rate at which susceptibles
turn into infectives, and $\sigma $ is the rate at which invectives recover
from the disease. For the purpose of this study, we will assume that $\mu
,\sigma ,\lambda >0$. The incidence function $\varphi (v)$ introduces a
nonlinear relation between the two classes of individuals. We assume $%
\varphi $ to be a continuously differentiable function on $\mathbb{R}^{+}$
satisfying%
\begin{equation}
\varphi \left( 0\right) =0,  \label{e1.4}
\end{equation}%
and%
\begin{equation}
0<v\varphi ^{\prime }\left( v\right) \leq \varphi \left( v\right) \text{ \
for all }v>0.  \label{e1.5}
\end{equation}

Infectious diseases are the leading cause of death in all living organisms.
The study of epidemiology has attracted the attention of a vast number of
researchers with the aim of improving the treatment of these diseases
through planning and predictions of the spread of the disease thereby
reducing mortality rates. In \cite{Hethcote1976}, Hethcote studied a simple
model resulting from bilinear and standard incidences $u\varphi \left(
v\right) =\lambda uv$ with\ $d_{1}=d_{2}=0$ and proved that the asymptotic
stability regions for the equilibrium points depend on the basic
reproduction number. Similar works and results can be found in \cite%
{Brauer2001,Korobeinikov2002,MA2003,Abha2015}. Nonlinear incidences of the
form $u\varphi \left( v\right) $ were investigated in\  \cite%
{Capasso1978,Alexander2004,Li2016,Jia2017,Avila2017} among numerous other
studies. However, these studies only considered the simple diffusion--free
case, which disregards the distribution of individuals within their
respective spatial domain. In \cite{Capasso1978}, the authors proposed a
mathematical epidemic model with a saturated incidence $S\varphi \left(
I\right) $, which they used to study the spread of cholera in Bari. The
study investigated the positivity, global existence, and uniqueness of the
system solution as well as the asymptotic stability of the equilibrium
points using an extension of the threshold theory. The authors considered as
a numerical example the case%
\begin{equation*}
\varphi \left( I\right) =\frac{kI}{1+\left( \frac{I}{\alpha }\right) }%
,k>0,\alpha >0.
\end{equation*}

In \cite{Webb1981}, Webb established the existence of solutions $\left(
S\left( t,.\right) ,I\left( t,.\right) ,R\left( t\right) \right) $ for a
spatially--diffusive SIR model with $\Lambda =\mu =0$,$\ d_{1}=d_{2}$,\ and $%
\varphi \left( v\right) =\lambda v$ in the one dimensional ($n=1$) spatial
region $\left[ -L,L\right] $. Using the theory of linear and nonlinear
semi-linear groups in Banach distances and Lyapunov stability techniques for
dynamic systems in metric spaces, the authors analyzed the behavior of
solutions as time goes to infinity and showed that $\left( S\left(
t,.\right) ,I\left( t,.\right) ,R\left( t,.\right) \right) \rightarrow
\left( S_{\infty },0,R_{\infty }\right) $ with $S_{\infty },R_{\infty }$
being positive constant functions on $\left[ -L,L\right] $. The same
scenario was considered again in section three of \cite{kim2010}\ but with $%
\Lambda ,\mu >0$. Considering a bird system with homogeneous Neumann
boundary conditions, the authors investigated the uniform bounds of the
solutions and the local and global stability of the equilibria. For the
purpose pf establishing the global stability the Lyapunov function
\begin{equation*}
V\left( t\right) =\int_{\Omega }\left[ \frac{1}{2}\left( S-\frac{\Lambda }{%
\mu }\right) ^{2}+\frac{\Lambda }{\mu }I\right] dx
\end{equation*}%
was used. A similar investigation was carried out in \cite{Chinviriyasit2010}%
.

A criss--cross infection model describing the spread of FIV (Feline
immunodeficiency virus) was proposed and studied in \cite{Fitzgibbon1995}.
This model falls under (\ref{e1.1}) with $n\geq 1$, $d_{i}>0$, $\Lambda =\mu
=0$, and bilinear incidence $u\varphi \left( v\right) =\lambda uv$. Another
important study is \cite{Alexander2004}, where the local and global
asymptotic stability of the simple epidemiological model with $\varphi
\left( v\right) =\beta \left[ 1+\varphi _{\eta }\left( v\right) \right] v$
were investigated by means of the Poincar\'{e} index theory. The study
showed that the basic reproductive number is independent of the functional $%
\varphi _{\eta }\left( v\right) $. Lyapunov functions were proposed in \cite%
{Korobeinikov2005} for ODE SIS and SEIR models with incidence rate $\varphi
\left( v\right) g\left( u\right) $ and $\Lambda =\mu \geq 0$. The same
authors generalized the incidence rate to the very general form $f\left(
u,v\right) $ and $\Lambda =\mu \geq 0$ in \cite{Korobeinikov2006}. Using
extended Lyapunov functions based on those constructed in \cite%
{Korobeinikov2005}, they established the global asymptotic stability.

More sophisticated nonlinear incidences of the form $k\frac{vu}{1+\delta v}$
were\ studied in \cite{Lahrouz2011,Che2014}, where\ the authors considered
an Avian influenza model and proved that the global asymptotic stablility
depends on the basic reproductive number. A more general Lyapunov function
was developed in \cite{Li2016} and used\ to establish the existence of an
endemic equilibrium for ODE SIS models with the nonlinear incidence $\varphi
\left( v\right) =\lambda u\varphi \left( v\right) $. Furthermore, a
spatially diffusive SIR epidemic model with two scenarios $\left(
d_{1}=0,d_{2}>0\right) $ and $\left( d_{1}>0,d_{2}=0\right) $ was assumed in
\cite{Kuniya2016} with all parameters being spatially heterogeneous, where $%
\varphi \left( v\right) =\lambda \left( x\right) v$ and $\sigma \left(
.\right) ,$ $\mu \left( .\right) ,$ $\gamma \left( .\right) ,$ $\lambda
\left( .\right) \in C\left( \overline{\Omega };\mathbb{R}\right) $ are all
strictly positive. The global asymptotic stability of $E_{0}=\left( \frac{%
b\left( x\right) }{\beta \left( x\right) },0\right) $ and $E^{\ast }=\left(
S^{\ast }\left( x\right) ,I^{\ast }\left( x\right) \right) $ was studied and
the authors concluded that the standard threshold dynamical behaviors depend
on the basic reproductive number. More recently, the authors of \cite%
{Jia2017} investigated the stability of the disease--free equilibrium and
the unique endemic equilibrium for an ODE SITAR epidemiological model with
general nonlinear incidence rate $u\varphi \left( v\right) $ a using the
geometric approach.

In the present study, we study the existence of equilibria and their
asymptotic stability conditions for the diffusive epidemic model considered
in \cite{Djebara2019}, which is an extension of that proposed in \cite%
{Li2016}. We recall that in \cite{Djebara2019}, we established the global
existence of solutions to problem (\ref{e1.1})--(\ref{e1.3}). We have now
established the system model and parameter descriptions. Section \ref%
{SecSteadyState} will define the basic reproductive number $R_{0}$ of the
proposed model and establish the existence of two equilibria. The local
asymptotic stability and instability of the disease--free equilibrium and
the endemic equilibrium are investigated. Section \ref{SecGlobal} will prove
that the two steady states of the model are globally asymptotically stable
using an appropriate Lyapunov functional. Finally, Section \ref{SecExampl}
will present some numerical examples to validate the theoretical analysis
presented throughout the paper.

\section{Preliminary Properties of the Model\label{SecSteadyState}}

Let us assume that the initial conditions $\left( u_{0},v_{0}\right) \in
\mathbb{R}_{\geq 0}^{2}$. Note that for $(u,v)\in \mathbb{R}_{\geq 0}^{2}$,
we have%
\begin{equation*}
\left \{
\begin{array}{l}
F(0,v)=\Lambda >0, \\
G(u,0)=0,%
\end{array}%
\right.
\end{equation*}
which makes the function $(F,G)^{T}$ essentially nonnegative. Hence, the
nonnegative quadrant $\mathbb{R}_{\geq 0}^{2}$ is an invariant set, see \cite%
[Proposition 2.1]{Haddad2010,Quitner2007}. By dropping the spatial variable,
the proposed system reduces to the following system of ordinary differential
equations (ODE):%
\begin{equation}
\left \{
\begin{array}{ll}
\frac{du}{dt}=F\left( u,v\right) & \  \  \  \  \text{in }(0,\infty ), \\
\frac{dv}{dt}=G\left( u,v\right) & \  \  \  \  \text{in }(0,\infty ),%
\end{array}%
\right.  \label{e1.6}
\end{equation}%
with initial conditions%
\begin{equation}
u\left( 0\right) =u_{0}\geq 0,\  \  \ v\left( 0\right) =v_{0}\geq 0.
\label{e1.7}
\end{equation}%
In the following subsections, we define an invariant region for the system,
identify the system's equilibria and their relation to the basic
reproduction number $R_{0}$, establish the global existence of solutions in
time, and investigate the local stability of the system in the ODE and PDE\
scenarios.

\subsection{Invariant Regions}

Throughout this paper,we let\ $N=u+v$ and $\sigma _{0}=\min \left( \sigma
,\mu \right) $. We also define the region%
\begin{equation*}
D=\left \{ (u,v):u,v\geq 0\text{ \ and \ }u+v\leq \frac{\Lambda }{\sigma _{0}%
}\right \} .
\end{equation*}%
The following proposition shows that $D$ is an invariant region of system (%
\ref{e1.6})--(\ref{e1.7}).

\begin{proposition}
The region $D$ is non--empty, attracting and positively invariant.
\end{proposition}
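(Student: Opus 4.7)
The plan is to reduce the three claims to a single scalar differential inequality for the total population $N=u+v$, after which non--emptiness, invariance, and attraction all fall out of a standard Gronwall--type argument.

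First I would note that non--emptiness is immediate since $(0,0)\in D$. For the non--negativity of each component along trajectories starting in $\mathbb{R}_{\geq 0}^{2}$, I would simply invoke the essential non--negativity of $(F,G)^{T}$ already observed in the paragraph preceding the proposition (using $F(0,v)=\Lambda>0$ and $G(u,0)=0$ together with the cited result from \cite{Haddad2010,Quitner2007}). Thus, starting from $(u_0,v_0)\in D$, we have $u(t),v(t)\geq 0$ for as long as the solution exists, and the only remaining requirement for positive invariance is the bound $u(t)+v(t)\leq \Lambda/\sigma_0$.

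Next, adding the two equations of (\ref{e1.6}) yields
\begin{equation*}
\frac{dN}{dt}=\Lambda-\mu u-\sigma v\leq \Lambda-\sigma_0 N,
\end{equation*}
where $\sigma_0=\min(\sigma,\mu)$. A direct integration (equivalently, a comparison with the scalar linear ODE $w'=\Lambda-\sigma_0 w$) then gives
\begin{equation*}
N(t)\leq \frac{\Lambda}{\sigma_0}+\Bigl(N(0)-\frac{\Lambda}{\sigma_0}\Bigr)e^{-\sigma_0 t}.
\end{equation*}
From this single inequality, if $N(0)\leq \Lambda/\sigma_0$ then $N(t)\leq \Lambda/\sigma_0$ for all $t\geq 0$, which combined with the componentwise non--negativity proves positive invariance. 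Moreover, for arbitrary non--negative initial data one has $\limsup_{t\to\infty}N(t)\leq \Lambda/\sigma_0$, so every non--negative trajectory is attracted to $D$.

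There is no real obstacle here; the only minor point to be careful about is that the differential inequality must be applied on the maximal interval of existence, but since $N$ is bounded on finite time intervals the solution is global and the estimate extends to all $t\geq 0$. I would close by remarking that this also guarantees boundedness of individual trajectories, which will be useful in later sections when applying LaSalle--type invariance arguments.
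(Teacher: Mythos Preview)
Your proposal is correct and follows essentially the same route as the paper: both sum the two equations to obtain the scalar inequality $N'\leq \Lambda-\sigma_0 N$, integrate (your formula and the paper's $\frac{\Lambda}{\sigma_0}(1-e^{-\sigma_0 t})+N_0 e^{-\sigma_0 t}$ are algebraically identical), and read off invariance and attraction from it. Your treatment is in fact slightly more complete, since you explicitly address non--emptiness and the componentwise non--negativity needed to stay in $D$, points the paper leaves implicit.
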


\begin{proof}
We start by summing the equations of system (\ref{e1.6})--(\ref{e1.7}),
which yields%
\begin{equation*}
\frac{d}{dt}N\left( t\right) =u_{t}+v_{t}\leq \Lambda -\sigma _{0}N.
\end{equation*}%
This implies that%
\begin{equation*}
N\left( t\right) \leq \frac{\Lambda }{\sigma _{0}}\left( 1-e^{-\sigma
_{0}t}\right) +N_{0}e^{-\sigma _{0}t}.
\end{equation*}%
Substituting the value of $N$ yields%
\begin{equation*}
\left( u+v\right) \left( t\right) \leq \frac{\Lambda }{\sigma _{0}}\left(
1-e^{-\sigma _{0}t}\right) +\left( u+v\right) \left( 0\right) e^{-\sigma
_{0}t},
\end{equation*}%
for $t\geq 0$. If the initial states satisfy $\left( u+v\right) \left(
0\right) \leq \frac{\Lambda }{\sigma _{0}}$, then $\left( u+v\right) \left(
t\right) \leq \frac{\Lambda }{\sigma _{0}}$ and%
\begin{equation*}
\limsup_{t\rightarrow \infty }N\left( t\right) \leq \frac{\Lambda }{\sigma
_{0}}.
\end{equation*}%
As a result, region $D$ is positively invariant and attracting within $%
\mathbb{R}_{\geq 0}^{2}$. Therefore, it is sufficient to consider the
dynamics of the model within $D$ as $D$ is the biologically feasible region
of the system where the existence and uniqueness results hold for the system.
\end{proof}

\subsection{Existence of Equilibria and the Basic Reproduction Number $R_{0}$%
}

In this section, we aim to show the existence of equilibrium solutions for (%
\ref{e1.1})--(\ref{e1.2}) and calculate the basic reproduction number.

\begin{theorem}
System (\ref{e1.6})--(\ref{e1.7}) has a single disease--free equilibrium $%
E_{0}=\left( \frac{\Lambda }{\mu },0\right) $. If the basic reproduction
number $R_{0}=\frac{\Lambda \lambda }{\mu \sigma }\varphi ^{\prime }(0)>1$,
then the system admits two distinct equilibria: $E_{0}$ and the positive
endemic equilibrium $E^{\ast }=\left( u^{\ast },v^{\ast }\right) \in \mathbb{%
R}_{>0}^{2}$.
\end{theorem}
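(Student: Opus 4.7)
The plan is to compute the equilibria of \eqref{e1.6} directly by solving $F(u,v)=G(u,v)=0$ and then turn existence/uniqueness of a positive solution into the analysis of a single scalar equation. Setting $G(u,v)=0$ yields $v(\lambda u\varphi(v)/v - \sigma)=0$, so either $v=0$ (the disease-free branch) or $v>0$ with $\lambda u\varphi(v)=\sigma v$. On the disease-free branch, $\varphi(0)=0$ from \eqref{e1.4} forces $F=\Lambda-\mu u=0$, giving $E_0=(\Lambda/\mu,0)$, and this holds unconditionally.

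For the endemic branch I would add the two equations to eliminate the nonlinearity: $F+G=\Lambda-\mu u-\sigma v=0$, hence $u=(\Lambda-\sigma v)/\mu$, which in particular forces $v<\Lambda/\sigma$ if we want $u>0$. Combining this with the relation from $G=0$ (using that \eqref{e1.5} implies $\varphi'>0$ on $(0,\infty)$, hence $\varphi(v)>0$ for $v>0$, so $u=\sigma v/(\lambda\varphi(v))$ is well-defined) gives the single scalar equation
\begin{equation*}
\Psi(v):=\frac{\sigma v}{\lambda\varphi(v)}-\frac{\Lambda-\sigma v}{\mu}=0,\qquad v\in(0,\Lambda/\sigma).
\end{equation*}
A positive equilibrium exists if and only if $\Psi$ has a zero in this interval, and its coordinates are then given by $v^\ast$ and $u^\ast=(\Lambda-\sigma v^\ast)/\mu>0$.

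The core step is to extract the right monotonicity from condition \eqref{e1.5}. The quotient $v\mapsto v/\varphi(v)$ has derivative $(\varphi(v)-v\varphi'(v))/\varphi(v)^2\ge 0$ by \eqref{e1.5}, so the first term in $\Psi$ is nondecreasing on $(0,\Lambda/\sigma)$, while $-(\Lambda-\sigma v)/\mu$ is strictly increasing. Hence $\Psi$ is strictly increasing on $(0,\Lambda/\sigma)$, which will give uniqueness as soon as a root exists. For the boundary values, L'H\^opital and the continuous differentiability of $\varphi$ at $0$ give
\begin{equation*}
\lim_{v\to 0^+}\Psi(v)=\frac{\sigma}{\lambda\varphi'(0)}-\frac{\Lambda}{\mu}=\frac{\mu\sigma-\Lambda\lambda\varphi'(0)}{\mu\lambda\varphi'(0)}=\frac{1-R_0}{R_0}\cdot\frac{\Lambda}{\mu},
\end{equation*}
while $\Psi(\Lambda/\sigma)=\sigma(\Lambda/\sigma)/(\lambda\varphi(\Lambda/\sigma))>0$. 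Thus $\Psi(0^+)<0<\Psi(\Lambda/\sigma)$ precisely when $R_0>1$, and then the intermediate value theorem combined with strict monotonicity gives a unique $v^\ast\in(0,\Lambda/\sigma)$. When $R_0\le 1$, we have $\Psi(0^+)\ge 0$ and $\Psi$ nondecreasing plus the strictly increasing linear part force $\Psi>0$ on $(0,\Lambda/\sigma)$, ruling out any positive equilibrium.

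The main obstacle is making sure that condition \eqref{e1.5} is used in exactly the right way: it is needed twice, once to guarantee $\varphi(v)>0$ for $v>0$ (so that the algebraic manipulations are legitimate), and once, via the monotonicity of $v/\varphi(v)$, to upgrade the existence argument to a uniqueness argument. Everything else is routine algebra and a one-dimensional intermediate value argument.
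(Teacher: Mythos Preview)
Your argument is correct and follows essentially the same route as the paper: reduce the equilibrium system to a single scalar equation in $v$, use condition \eqref{e1.5} to obtain monotonicity, and apply the intermediate value theorem for existence and uniqueness. The paper works instead with the equivalent function $h(v)=\frac{\Lambda\lambda}{\mu\sigma}\frac{\varphi(v)}{v}-\frac{\lambda}{\mu}\varphi(v)-1$ (related to your $\Psi$ by $h=-\frac{\lambda\varphi(v)}{\sigma v}\Psi$) on the interval $(0,\Lambda/\sigma_0)$ with $\sigma_0=\min(\mu,\sigma)$, but these are only cosmetic differences; your choice of interval $(0,\Lambda/\sigma)$, obtained directly from $u>0$ via $F+G=0$, is in fact slightly cleaner.
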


\begin{proof}
The positive equilibria of model (\ref{e1.6})--(\ref{e1.7}) satisfy%
\begin{equation}
\left \{
\begin{array}{l}
F\left( u,v\right) =\Lambda -\lambda u\varphi \left( v\right) -\mu u=0, \\
G\left( u,v\right) =\lambda u\varphi \left( v\right) -\sigma v=0.%
\end{array}%
\right.  \label{e1.8}
\end{equation}%
If $u=0$, it is easy to see that the system has no equilibrium. On the other
hand, only equilibrium is found for $v=0$ and that is $E_{0}=\left( \frac{%
\Lambda }{\mu },0\right) $.

Next, we study the existence conditions of an endemic steady state in the
case $v>0$. From the second part of (\ref{e1.8}), and because $\lambda >0$
and $\varphi \left( v\right) >0$, we obtain%
\begin{equation*}
u=\frac{\sigma v}{\lambda \varphi \left( v\right) }.
\end{equation*}%
Substituting this into the first equation yields%
\begin{equation}
h\left( v\right) =0\text{ \ for any }v>0,  \label{e1.10}
\end{equation}%
where%
\begin{equation*}
h\left( v\right) =\frac{\Lambda \lambda }{\mu \sigma }\frac{\varphi \left(
v\right) }{v}-\frac{\sigma \lambda }{\mu \sigma }\varphi \left( v\right) -1
\end{equation*}%
is continuous for any $v>0$. It is clear that%
\begin{equation*}
\underset{v\rightarrow 0}{\lim }h\left( v\right) =\underset{v\rightarrow 0}{%
\lim }\frac{\Lambda \lambda }{\mu \sigma }\varphi ^{\prime }(v)-1=R_{0}-1,
\end{equation*}%
where $R_{0}$ is the basic reproduction number to be identified next. Using $%
\sigma _{0}=\min \left( \sigma ,\mu \right) $, we have%
\begin{equation*}
\underset{v\rightarrow \frac{\Lambda }{\sigma _{0}}}{\lim }h\left( v\right)
=h(\frac{\Lambda }{\sigma _{0}})=\frac{\lambda \left( \sigma _{0}-\sigma
\right) }{\mu \sigma }\varphi \left( \frac{\Lambda }{\sigma _{0}}\right)
-1<0.
\end{equation*}%
Hence for all\ $R_{0}>1$,%
\begin{equation*}
h(\frac{\Lambda }{\sigma _{0}})\underset{v\rightarrow 0}{\lim }h\left(
v\right) =h(\frac{\Lambda }{\sigma _{0}})(R_{0}-1)<0.
\end{equation*}%
By applying the intermediate value theorem, there exists a real $\ v^{\ast
}\in \left( 0,\frac{\Lambda }{\sigma _{0}}\right) $ such that (\ref{e1.10})
holds. Using (\ref{e1.5}), we can show that the function $h$ is
monotonically decreasing for all $v>0$ as%
\begin{equation*}
\frac{dh}{dv}\left( v\right) =\frac{\Lambda \lambda \left[ v\varphi ^{\prime
}\left( v\right) -\varphi \left( v\right) \right] -\sigma \lambda
v^{2}\varphi ^{\prime }\left( v\right) }{\mu \sigma v^{2}}<0.
\end{equation*}%
Then, there exists a unique real $v^{\ast }\in \left( 0,\frac{\Lambda }{%
\sigma _{0}}\right) $ such that $h(v^{\ast })=0$, which implies the
existence of a unique $u^{\ast }=\frac{\sigma v^{\ast }}{\lambda \varphi
\left( v^{\ast }\right) }$. Note that in $\left( \frac{\Lambda }{\sigma _{0}}%
,+\infty \right) $ the second equation of (\ref{e1.8}) has no solution
because
\begin{equation*}
\underset{v\in \left( \frac{\Lambda }{\sigma _{0}},+\infty \right) }{\max }%
h\left( v\right) \leq h\left( \frac{\Lambda }{\sigma _{0}}\right) <0.
\end{equation*}%
This concludes the proof.
\end{proof}

In the previous proof, we used the reproduction number $R_{0}$ of system (%
\ref{e1.6})--(\ref{e1.7}). It is now time to identify its value by means of
the next generation matrix method formulated in \cite{Sheikh2011} and
described further in \cite[Lemma 1 page 32]{Driessche2002}. System (\ref%
{e1.6})--(\ref{e1.7}) may be rewritten in vector form as%
\begin{eqnarray*}
\left(
\begin{array}{c}
v_{t} \\
u_{t}%
\end{array}%
\right) &=&\left(
\begin{array}{c}
\lambda u\varphi \left( v\right) -\sigma v \\
\Lambda -\lambda u\varphi \left( v\right) -\mu u%
\end{array}%
\right) \\
&=&%
\begin{pmatrix}
\lambda u\varphi \left( v\right) \\
0%
\end{pmatrix}%
-%
\begin{pmatrix}
\sigma v \\
-\Lambda +\lambda u\varphi \left( v\right) +\mu u%
\end{pmatrix}%
.
\end{eqnarray*}%
The Jacobian matrices corresponding to vectors $%
\begin{pmatrix}
\lambda u\varphi \left( v\right) \\
0%
\end{pmatrix}%
$ and $%
\begin{pmatrix}
\sigma v \\
-\Lambda +\lambda u\varphi \left( v\right) +\mu u%
\end{pmatrix}%
$ at the disease--free equilibrium $E_{0}=\left( \frac{\Lambda }{\mu }%
,0\right) $ are given, respectively, by%
\begin{equation*}
\begin{pmatrix}
\frac{\lambda \Lambda }{\mu }\varphi ^{\prime }(0) & 0 \\
0 & 0%
\end{pmatrix}%
:=%
\begin{pmatrix}
S & 0 \\
0 & 0%
\end{pmatrix}%
,
\end{equation*}%
and%
\begin{equation*}
\begin{pmatrix}
\sigma & 0 \\
\frac{\lambda \Lambda }{\mu }\varphi ^{\prime }(0) & \mu%
\end{pmatrix}%
:=%
\begin{pmatrix}
V & 0 \\
V_{1} & V_{2}%
\end{pmatrix}%
.
\end{equation*}%
The basic reproduction number $R_{0}$ is simply the spectral radius of the
next generation matrix%
\begin{equation*}
K=SV^{-1}=\left( \frac{\lambda \Lambda }{\mu }\varphi ^{\prime }(0)\right)
\left( \sigma \right) ^{-1}=\frac{\lambda \Lambda }{\mu \sigma }\varphi
^{\prime }(0),
\end{equation*}%
which is given by%
\begin{equation*}
R_{0}=\rho \left( SV^{-1}\right) =\frac{\Lambda \lambda }{\mu \sigma }%
\varphi ^{\prime }(0).
\end{equation*}

\subsection{The Local ODE Stability}

Now that we have identified two constant steady states for system (\ref{e1.6}%
)--(\ref{e1.7}), namely $E_{0}=\left( \frac{\Lambda }{\mu },0\right) $ and $%
E^{\ast }=\left( u^{\ast },v^{\ast }\right) $, we move to study their local
asymptotic stability as described in the following theorem.

\begin{theorem}
For system (\ref{e1.6})--(\ref{e1.7}):

\begin{itemize}
\item[(i)] If $R_{0}\leq 1$, the disease-free equilibrium solution$\ E_{0}$ $%
=\left( \frac{\Lambda }{\mu },0\right) $ is the only steady state of the
system and is locally asymptotically stable.

\item[(ii)] If $R_{0}>1$, $E_{0}$ is unstable and the positive constant
endemic steady state $E^{\ast }$ $=\left( u^{\ast },v^{\ast }\right) $ is
locally asymptotically stable.
\end{itemize}
\end{theorem}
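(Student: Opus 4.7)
My plan is to reduce both statements to a Jacobian analysis at each equilibrium, and then to invoke the Routh--Hurwitz criterion in the planar case (which for $2\times 2$ matrices just amounts to checking trace $<0$ and determinant $>0$). I would first compute the Jacobian of the vector field $(F,G)$ once and for all:
\[
J(u,v)=\begin{pmatrix}-\mu-\lambda\varphi(v) & -\lambda u\,\varphi'(v)\\ \lambda\varphi(v) & -\sigma+\lambda u\,\varphi'(v)\end{pmatrix}.
\]
The Neumann boundary condition and diffusion play no role at this stage since we are looking at the ODE reduction (\ref{e1.6})--(\ref{e1.7}); the PDE counterpart will be treated separately later in the paper.

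For part (i), I would evaluate $J$ at $E_0=(\Lambda/\mu,0)$. Using $\varphi(0)=0$, the matrix becomes upper triangular with diagonal entries $-\mu$ and $-\sigma+\lambda\Lambda\varphi'(0)/\mu=\sigma(R_0-1)$. Thus for $R_0<1$ both eigenvalues are strictly negative and local asymptotic stability follows from the Hartman--Grobman theorem. The delicate subcase is $R_0=1$, where one eigenvalue vanishes and linearization is inconclusive; here I would bypass center manifold machinery by exhibiting a simple Lyapunov function. The condition $v\varphi'(v)\le\varphi(v)$ from (\ref{e1.5}) implies $\varphi(v)/v$ is non-increasing on $(0,\infty)$, so $\varphi(v)\le v\,\varphi'(0)$ for all $v\ge0$. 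Taking $V(u,v)=v$ together with $\dot N\le\Lambda-\sigma_0 N$ (which traps orbits near $E_0$ in the region $u\le\Lambda/\mu+\varepsilon$), one obtains $\dot V\le v\big[-\sigma+\lambda u\,\varphi'(0)\big]\le\sigma v(R_0-1)\le0$ once $u$ has entered a small neighborhood of $\Lambda/\mu$, and LaSalle's invariance principle then forces $v\to0$ and in turn $u\to\Lambda/\mu$.

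For part (ii), the instability of $E_0$ when $R_0>1$ is immediate from the same computation since $\sigma(R_0-1)>0$ is now a positive eigenvalue. The substantive task is the local asymptotic stability of $E^*$. I would evaluate $J$ at $E^*$ and eliminate $\lambda u^*$ using the equilibrium identity $\lambda u^*\varphi(v^*)=\sigma v^*$, so that $\lambda u^*\varphi'(v^*)=\sigma\,v^*\varphi'(v^*)/\varphi(v^*)$, which is at most $\sigma$ by (\ref{e1.5}). This immediately gives $\mathrm{tr}\,J(E^*)=-\mu-\lambda\varphi(v^*)+\big(\lambda u^*\varphi'(v^*)-\sigma\big)\le-\mu-\lambda\varphi(v^*)<0$. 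After cancellation of the cross term $\lambda^2 u^*\varphi(v^*)\varphi'(v^*)$, the determinant reduces to $\mu\big(\sigma-\lambda u^*\varphi'(v^*)\big)+\lambda\sigma\varphi(v^*)$, which is strictly positive by the same inequality and by $\varphi(v^*)>0$. The Routh--Hurwitz conditions are thus satisfied and $E^*$ is locally asymptotically stable.

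The main obstacle, as noted, is the degenerate case $R_0=1$ at $E_0$, which is the only place where linearization is silent. Everything else is a mechanical computation of the Jacobian and a careful bookkeeping using the structural hypothesis (\ref{e1.5}) on $\varphi$, so I expect the write-up to be short once the Lyapunov argument for the boundary case is in place.
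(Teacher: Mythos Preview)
Your linearization approach coincides with the paper's: the same Jacobian, the same triangular form at $E_0$, and the same trace/determinant check at $E^*$ via the equilibrium identity $\sigma=\lambda u^*\varphi(v^*)/v^*$ together with (\ref{e1.5}). For $R_0<1$ and for part~(ii) there is no daylight between the two arguments.

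The divergence is at $R_0=1$. The paper simply records $\lambda_2=0$ and asserts asymptotic stability, which of course does not follow from linearization alone; you are right to flag this as the genuine obstacle. However, your proposed patch via $V=v$ does not close the gap as written. The bound $\dot V\le v\bigl[-\sigma+\lambda u\,\varphi'(0)\bigr]$ yields $\dot V\le0$ only when $u\le\Lambda/\mu$, whereas every neighborhood of $E_0$ contains points with $u>\Lambda/\mu$; at $R_0=1$ the bracket is then strictly positive (take $\varphi$ linear to see this sharply), so $V$ can increase along such orbits and LaSalle does not apply in a full neighborhood. A rigorous treatment of this borderline case needs either a center manifold reduction---the one-dimensional reduced dynamics turn out to be $\dot v=-cv^2+O(v^3)$ with $c>0$---or a Lyapunov function that also penalizes excursions of $u$ above $\Lambda/\mu$, for instance one built from the Volterra term $u-\tfrac{\Lambda}{\mu}-\tfrac{\Lambda}{\mu}\ln\bigl(\tfrac{\mu u}{\Lambda}\bigr)$.
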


\begin{proof}
To prove the local asymptotic stability of the constant steady states, we
make advantage of the Jacobian matrix, which may be given by%
\begin{equation*}
J\left( u,v\right) =\left(
\begin{tabular}{ll}
$F_{u}\left( u,v\right) $ & $F_{v}\left( u,v\right) $ \\
$G_{u}\left( u,v\right) $ & $G_{v}\left( u,v\right) $%
\end{tabular}%
\right) ,
\end{equation*}%
where $F_{u}$, $F_{v}$, $G_{u}$, and $G_{v}$ are the first partial
derivatives. Evaluating the derivatives yields%
\begin{equation}
J\left( u,v\right) =\left(
\begin{tabular}{ll}
$-\lambda \varphi \left( v\right) -\mu $ & $-\lambda u\varphi ^{\prime
}\left( v\right) $ \\
$\lambda \varphi \left( v\right) $ & $\lambda u\varphi ^{\prime }\left(
v\right) -\sigma $%
\end{tabular}%
\right) .  \label{e1.11}
\end{equation}%
Evaluating $J\left( u,v\right) $ at $E_{0}$ $=\left( \frac{\Lambda }{\mu }%
,0\right) $ with (\ref{e1.4}) in mind yields
\begin{equation*}
J\left( E_{0}\right) =\left(
\begin{tabular}{ll}
$-\mu $ & $-\lambda \frac{\Lambda }{\mu }\varphi ^{\prime }\left( 0\right) $
\\
$0$ & $\lambda \frac{\Lambda }{\mu }\varphi ^{\prime }\left( 0\right)
-\sigma $%
\end{tabular}%
\right) .
\end{equation*}%
The local stability of $E_{0}$ $=\left( \frac{\Lambda }{\mu },0\right) $
rests on the nature of the eigenvalues corresponding to $J\left(
E_{0}\right) $. The eigenvalues can be easily shown to be $\lambda _{1}=-\mu
<0$ and $\lambda _{2}=\lambda \frac{\Lambda }{\mu }\varphi ^{\prime }\left(
0\right) -\sigma $. It is easy to see that $\lambda _{2}<0$ if $R_{0}<1$,
leading to the asymptotic stability of $E_{0}$. Alternatively, if $R_{0}=1$,
the eigenvalues reduce to $\lambda _{1}=-\mu $ and $\lambda _{2}=0$, which
again means that $E_{0}$ is asymptotically stable.

The second case if where $R_{0}>1$. The equilibrium $E_{0}$ is clearly
unstable but the system possesses a positive endemic equilibrium $E^{\ast }$
$=\left( u^{\ast },v^{\ast }\right) $ where $u^{\ast },v^{\ast }>0$. Since $%
E^{\ast }$ satisfies (\ref{e1.8}), we have
\begin{equation}
\left \{
\begin{array}{l}
\Lambda =\lambda u^{\ast }\varphi \left( v^{\ast }\right) +\mu u^{\ast }, \\
\sigma =\frac{\lambda u^{\ast }\varphi \left( v^{\ast }\right) }{v^{\ast }}.%
\end{array}%
\right.  \label{e1.12}
\end{equation}%
Evaluating the Jacobian matrix (\ref{e1.11}) at $E^{\ast }$ $=\left( u^{\ast
},v^{\ast }\right) $ yields%
\begin{equation*}
J\left( u^{\ast },v^{\ast }\right) =\left(
\begin{tabular}{ll}
$-\lambda \varphi \left( v^{\ast }\right) -\mu $ & $-\lambda u^{\ast
}\varphi ^{\prime }\left( v^{\ast }\right) $ \\
$\lambda \varphi \left( v^{\ast }\right) $ & $\lambda u^{\ast }\varphi
^{\prime }\left( v^{\ast }\right) -\sigma $%
\end{tabular}%
\right) ,
\end{equation*}%
which has trace%
\begin{equation}
tr(J( u^{\ast },v^{\ast }) )= - ( \lambda \varphi ( v^{\ast }) +\mu ) -\sigma +\lambda u^{\ast }\varphi ^{\prime
}\left( v^{\ast }\right) .
\end{equation}%
Using (\ref{e1.12}) and (\ref{e1.5}), this can be rewritten as%
\begin{equation*}
tr(J\left( u^{\ast },v^{\ast }\right) )=-\frac{\Lambda }{u^{\ast }}-\lambda
u^{\ast }\left[ \frac{\varphi \left( v^{\ast }\right) }{v^{\ast }}-\varphi
^{\prime }\left( v^{\ast }\right) \right] <0.
\end{equation*}%
The determinant of the Jacobian may be given by%
\begin{equation*}
\det J\left( u^{\ast },v^{\ast }\right) =\lambda \sigma \varphi \left(
v^{\ast }\right) +\mu \sigma -\mu \lambda u^{\ast }\varphi ^{\prime }\left(
v^{\ast }\right) .
\end{equation*}%
Using (\ref{e1.12}), we obtain%
\begin{eqnarray*}
\det \left( J\left( u^{\ast },v^{\ast }\right) \right) &=&\lambda \frac{%
\lambda u^{\ast }\varphi \left( v^{\ast }\right) }{v^{\ast }}\varphi \left(
v^{\ast }\right) +\mu \frac{\lambda u^{\ast }\varphi \left( v^{\ast }\right)
}{v^{\ast }}-\mu \lambda u^{\ast }\varphi ^{\prime }\left( v^{\ast }\right)
\\
&=&\frac{\lambda ^{2}u^{\ast }\left( \varphi \left( v^{\ast }\right) \right)
^{2}}{v^{\ast }}+\mu \lambda u^{\ast }\left[ \frac{\varphi \left( v^{\ast
}\right) }{v^{\ast }}-\varphi ^{\prime }\left( v^{\ast }\right) \right] ,
\end{eqnarray*}%
and from (\ref{e1.5}), we obtain $\det \left( J\left( u^{\ast },v^{\ast
}\right) \right) >0$. Hence, the equilibrium $E^{\ast }$ $=\left( u^{\ast
},v^{\ast }\right) $ is locally asymptotically stable.
\end{proof}

\subsection{Global Existence of Solutions}

Assuming that the function $\varphi $ satisfies conditions (\ref{e1.4})--(%
\ref{e1.5}), the following proposition establishes that solutions of system (%
\ref{e1.1})--(\ref{e1.3}) exist globally in time and are bounded by a
parameter dependent constant. First, however, we state a lemma that was
developed in \cite{Bendoukha2017}\ and which will become useful later on.

\begin{lemma}
\label{Lemma1}Condition (\ref{e1.5}) implies%
\begin{equation}
0<\frac{\varphi \left( v\right) }{v}\leq \varphi ^{\prime }\left( 0\right)
\text{ for all }v>0.  \label{e1.13}
\end{equation}
\end{lemma}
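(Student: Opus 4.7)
The plan is to study the auxiliary function $f(v) := \varphi(v)/v$ on $(0,\infty)$ and show it is bounded above by its limit at $0$, which equals $\varphi'(0)$ by definition.

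First I would verify the lower bound $\varphi(v)/v > 0$. From (\ref{e1.5}) we have $v\varphi'(v) > 0$ for $v>0$, hence $\varphi'(v) > 0$ on $(0,\infty)$; combining this with $\varphi(0)=0$ from (\ref{e1.4}) shows $\varphi(v) > 0$ for $v>0$, so $f(v) > 0$. (Alternatively, the inequality $0 < v\varphi'(v) \le \varphi(v)$ directly forces $\varphi(v) > 0$.)

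Next I would establish the upper bound by monotonicity. Since $\varphi$ is continuously differentiable on $\mathbb{R}^{+}$, so is $f$ on $(0,\infty)$, and
\begin{equation*}
f'(v) \;=\; \frac{v\varphi'(v) - \varphi(v)}{v^{2}} \;\le\; 0,
\end{equation*}
by the right half of (\ref{e1.5}). Therefore $f$ is non-increasing on $(0,\infty)$, so for every $v>0$,
\begin{equation*}
f(v) \;\le\; \lim_{s \to 0^{+}} f(s) \;=\; \lim_{s \to 0^{+}} \frac{\varphi(s) - \varphi(0)}{s} \;=\; \varphi'(0),
\end{equation*}
where the penultimate equality uses $\varphi(0)=0$ and the last uses differentiability of $\varphi$ at $0$. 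Combined with the lower bound this gives (\ref{e1.13}).

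I do not anticipate a serious obstacle: the only subtlety is justifying $\lim_{s\to 0^{+}} \varphi(s)/s = \varphi'(0)$, which is immediate from $\varphi(0)=0$ and the assumed continuous differentiability of $\varphi$ on $\mathbb{R}^{+}$. The rest is an application of the quotient rule and the monotonicity principle.
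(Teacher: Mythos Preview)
Your argument is correct. The paper does not actually supply its own proof of this lemma; it merely states the result and cites \cite{Bendoukha2017}. That said, the key observation you use---that condition~(\ref{e1.5}) makes $\varphi(v)/v$ non-increasing on $(0,\infty)$---is exactly the one the paper invokes elsewhere (for instance in the proof of Lemma~\ref{Lemma 2} and in the monotonicity of $h$ in the proof of Theorem~1), so your approach is fully in line with the paper's methods.
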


\begin{proposition}
\label{proposition_exist}For any initial conditions $(u_{0},v_{0})\in C(%
\overline{\Omega })\times C(\overline{\Omega })$, the solution $%
(u(t,x),v(t,x))$ of system (\ref{e1.1})--(\ref{e1.3}) exists uniquely and
globally in time. In addition, there exists a positive constant $\mathcal{C}%
(u_{0},v_{0},\Lambda ,\mu ,\lambda ,\sigma )$ such that $\forall t>0$,%
\begin{equation}
\left \Vert u(t,.)\right \Vert _{L^{\infty }(\Omega )}+\left \Vert
v(t,.)\right \Vert _{L^{\infty }(\Omega )}\leq \mathcal{C}.  \label{ex_eq_1}
\end{equation}%
Furthermore, there exists a positive constant $\widetilde{\mathcal{C}}%
(\Lambda ,\mu ,\lambda ,\sigma )$ such that $\forall t\geq T$ for some large
$T>0$,%
\begin{equation}
\left \Vert u(t,.)\right \Vert _{L^{\infty }(\Omega )}+\left \Vert
v(t,.)\right \Vert _{L^{\infty }(\Omega )}\leq \widetilde{\mathcal{C}}.
\label{ex_eq_2}
\end{equation}
\end{proposition}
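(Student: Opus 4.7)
The plan is to establish local well-posedness first and then bootstrap nonnegativity, $L^1$ dissipation, and heat-kernel smoothing into a uniform $L^\infty$ bound. Since $\varphi\in C^1(\mathbb{R}_{\geq 0})$, the reaction terms $F,G$ are locally Lipschitz in $(u,v)$, so standard analytic semigroup theory (Amann/Pazy) applied to the sectorial operator $\mathrm{diag}(-d_1\Delta,-d_2\Delta)$ with Neumann boundary conditions yields a unique maximal classical solution on some interval $[0,T_{\max})$. The quasi-positivity already noted in the excerpt ($F(0,v)=\Lambda>0$ and $G(u,0)=0$) together with $u_0,v_0\geq0$ gives $u,v\geq 0$ on $[0,T_{\max})\times\overline{\Omega}$ by the scalar parabolic maximum principle.

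Next I would derive the $L^\infty$ bound for $u$. Dropping the nonnegative term $\lambda u\varphi(v)$, the first equation gives $u_t-d_1\Delta u\leq \Lambda-\mu u$ with Neumann data, so comparison with the ODE $\dot U=\Lambda-\mu U$, $U(0)=\|u_0\|_\infty$, produces the pointwise estimate
\begin{equation*}
\|u(t,\cdot)\|_{L^\infty(\Omega)}\leq \max\!\Bigl(\|u_0\|_\infty,\tfrac{\Lambda}{\mu}\Bigr),\qquad \limsup_{t\to\infty}\|u(t,\cdot)\|_{L^\infty(\Omega)}\leq\tfrac{\Lambda}{\mu}.
\end{equation*}
In parallel, integrating the sum $u_t+v_t-d_1\Delta u-d_2\Delta v=\Lambda-\mu u-\sigma v$ over $\Omega$ and using Neumann boundary conditions kills the diffusion terms and yields
\begin{equation*}
\tfrac{d}{dt}\!\int_{\Omega}(u+v)\,dx\leq \Lambda|\Omega|-\sigma_0\!\int_{\Omega}(u+v)\,dx,
\end{equation*}
so $\|u(t,\cdot)+v(t,\cdot)\|_{L^1(\Omega)}$ stays bounded for all time with ultimate bound $\Lambda|\Omega|/\sigma_0$; in particular $\|v(t,\cdot)\|_{L^1(\Omega)}$ is globally controlled.

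The main obstacle is promoting this $L^1$ control on $v$ to an $L^\infty$ control, since the naive estimate $v_t-d_2\Delta v\leq (\lambda\varphi'(0)\|u\|_\infty-\sigma)v$ coming from Lemma \ref{Lemma1} is not sign-definite. I would handle this via the Duhamel representation
\begin{equation*}
v(t)=e^{t(d_2\Delta-\sigma)}v_0+\lambda\!\int_0^t\! e^{(t-s)(d_2\Delta-\sigma)}\bigl(u(s)\varphi(v(s))\bigr)\,ds,
\end{equation*}
combined with the classical $L^p\!\to\! L^q$ smoothing estimates for the Neumann heat semigroup (with exponential gain $e^{-\sigma(t-s)}$) and the pointwise bound $\varphi(v)\leq\varphi'(0)v$ from Lemma \ref{Lemma1}. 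Together with $\|u\|_\infty\leq M$, a finite bootstrap $L^1\!\to\! L^{q_1}\!\to\!\cdots\!\to\! L^\infty$, whose number of steps depends only on the dimension $n$, produces a uniform $L^\infty$ bound on $v$ whose size is controlled by initial data and the parameters.

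Combining the $L^\infty$ bounds on $u$ and $v$ rules out blow-up and forces $T_{\max}=\infty$, yielding the constant $\mathcal{C}(u_0,v_0,\Lambda,\mu,\lambda,\sigma)$ in (\ref{ex_eq_1}). For the asymptotic bound (\ref{ex_eq_2}), the ODE comparison gives $\limsup_t\|u(t)\|_\infty\leq\Lambda/\mu$ and $\limsup_t\|u+v\|_{L^1}\leq\Lambda|\Omega|/\sigma_0$; feeding these ultimate bounds into the same Duhamel/bootstrap argument (with the initial-data contribution decaying exponentially in $t$) shows that both $\|u\|_\infty$ and $\|v\|_\infty$ are eventually bounded by a constant $\widetilde{\mathcal{C}}$ depending only on $\Lambda,\mu,\lambda,\sigma$, as claimed.
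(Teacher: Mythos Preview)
Your proposal is correct and follows essentially the same route as the paper: local existence via analytic semigroup theory, nonnegativity by quasi-positivity, an $L^\infty$ bound on $u$ by ODE comparison, an $L^1$ bound on $u+v$ by integrating the sum, and then an $L^1\!\to\!L^\infty$ bootstrap on $v$ to rule out blow-up, with the same ingredients re-used for the eventual (initial-data-free) bound. The only cosmetic difference is that where you spell out the Duhamel/heat-semigroup $L^p\!\to\!L^q$ bootstrap, the paper instead invokes Alikakos' theorem for the global bound and a lemma of Peng--Zhao for the ultimate bound; these are the same mechanism packaged as citations.
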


\begin{proof}
Let us define $\mathcal{X}=\mathcal{Y}\times \mathcal{Y}$ with $\mathcal{Y}%
=C(\overline{\Omega })$. Given%
\begin{equation*}
\psi =\left(
\begin{matrix}
\psi _{1} \\
\psi _{2}%
\end{matrix}%
\right) ,
\end{equation*}%
the norm on $\mathcal{X}$ is defined as $\left \Vert \psi \right \Vert
=\left \Vert \psi _{1}\right \Vert _{\mathcal{Y}}+\left \Vert \psi
_{2}\right \Vert _{\mathcal{Y}}$. Hence, $(\mathcal{X},\left \Vert
.\right \Vert )$ is a Banach space. Let us also define the unbounded operator
\begin{equation*}
\tilde{A}:D(\tilde{A})\subset \mathcal{X}\longrightarrow \mathcal{X}
\end{equation*}%
such that%
\begin{equation*}
\tilde{A}\left(
\begin{matrix}
u \\
v%
\end{matrix}%
\right) =(\tilde{A}_{1}u,\tilde{A}_{2}v)^{T}=(d_{1}\Delta ,d_{2}\Delta )^{T},
\end{equation*}%
and%
\begin{equation*}
D(\tilde{A})=D(\tilde{A}_{1})\times D(\tilde{A}_{2}),
\end{equation*}%
with%
\begin{equation*}
D(\tilde{A}_{i})=\left \{
\begin{matrix}
\varphi \in C^{2}(\Omega )\cap C^{1}(\overline{\Omega }): & \tilde{A}%
_{i}\varphi \in C(\overline{\Omega }), & \frac{\partial \varphi }{\partial
\nu }=0 & \text{on }\partial \Omega
\end{matrix}%
\right \} ,\  \  \text{for }i=1,2.
\end{equation*}%
We may write system (\ref{e1.1})--(\ref{e1.3}) in abstract form in $\mathcal{%
X}$ as%
\begin{equation*}
\left \{
\begin{array}{ll}
\frac{dU}{dt}(t)=AU(t)+\Upsilon (U(t)), & t>0, \\
U(0)=U_{0}\in \mathcal{X}, &
\end{array}%
\right.
\end{equation*}%
where $U(t)=(u(t,.),v(t,.))^{T}$, $\Upsilon (U(t))=(F(U(t)),G(U(t)))^{T}$
and $A= diag(A_{1},A_{2})$, in which $A_{i}$ is the closure of $\tilde{A}_{i}
$ in $\mathcal{Y}$ for $i=1,2$. The operator $A$ is the infinitesimal
generator of an analytical semigroup of bounded linear operators $\left \{
T(t)\right \} _{t\geq 0}$ on $\mathcal{X}$, \cite{Smith1995}. Since $\Upsilon
$ is locally Lipschitz in $\mathcal{X}$, it follows that system (\ref{e1.1}%
)--(\ref{e1.3}) admits a unique local solution $(u(t,x),v(t,x))$ for $t\in %
\left[ 0,T_{max}\right) $ and $x\in \overline{\Omega }$ where $T_{max}$ is
the maximal existence time, see \cite{Pazy1983}.

Let us now consider the case $u(t,x)\in (0,T_{max})\times \Omega $, which
can be formulated as%
\begin{equation}
\left \{
\begin{array}{ll}
\dfrac{\partial u}{\partial t}-d_{1}\Delta u=\Lambda -\mu u-\lambda u\varphi
\left( v\right) , & \text{in }(0,T_{max})\times \Omega , \\
u(0,x)=u_{0}(x), & \text{on }\Omega , \\
\frac{\partial u}{\partial \nu }=0, & \text{on }(0,T_{max})\times \partial
\Omega .%
\end{array}%
\right.  \label{eu}
\end{equation}%
We can easily observe that an upper solution exists for (\ref{eu}) for any
nonnegative function $v(t,x)$. This upper solution is given by%
\begin{equation*}
N_{1}:=\max \left \{ \frac{\Lambda }{\mu },\left \Vert u_{0}\right \Vert _{C(%
\overline{\Omega })}\right \} .
\end{equation*}%
Using the comparison principle from \cite{Protter1984}, we obtain $%
u(t,x)\leq N_{1}$ in $[0,T_{max})\times \overline{\Omega }$, and
consequently, $u$ is uniformly bounded in $[0,T_{max})\times \overline{%
\Omega }$. Integrating the equations of (\ref{e1.1}) over $\Omega $ and
taking the sum of the resulting two identities yields%
\begin{align}
\frac{d}{dt}\int_{\Omega }(u(t,x)+v(t,x))dx& =\left \vert \Omega \right
\vert \Lambda -\int_{\Omega }(\mu u(t,x)+\sigma v(t,x))dx,  \notag \\
& \leqslant \left \vert \Omega \right \vert \Lambda -\sigma _{0}\int_{\Omega
}(u(t,x)+v(t,x))dx,
\end{align}%
where $\sigma _{0}=\min \left \{ \mu ,\sigma \right \} $. From the
well--known Gronwall's inequality, we have for $t\in (0,T_{max})$,%
\begin{equation}
\int_{\Omega }(u(t,x)+v(t,x))dx\leq N_{2},
\end{equation}%
where $N_{2}>0$. Hence, for $t\in (0,T_{max})$,%
\begin{equation}
\int_{\Omega }v(t,x)dx\leq N_{2}.
\end{equation}%
Following the footsteps of \cite[Theorem 3.1]{Alikakos1979} and using the $v$%
--equation, we conclude $\exists N_{3}>0$ depending on $N_{2}$ such that $%
v(t,x)\leq N_{3}$ over $[0,T_{max})\times \overline{\Omega }$. Therefore, $v$
is also uniformly bounded in $[0,T_{max})\times \overline{\Omega }$. Using
the standard theory of semilinear parabolic systems described in \cite%
{Henry1993}, we deduce $T_{max}=\infty $.

When $T_{max}=\infty $, system (\ref{eu}) becomes%
\begin{equation}
\left \{
\begin{array}{ll}
\dfrac{\partial u}{\partial t}-d_{1}\Delta u=\Lambda -\mu u-\lambda u\varphi
\left( v\right) \leq \Lambda -\mu u, & \text{in }(0,\infty )\times \Omega ,
\\
u(0,x)=u_{0}(x)\leq \left \Vert u_{0}\right \Vert _{C(\overline{\Omega })},
& \text{on }\Omega , \\
\frac{\partial u}{\partial \nu }=0, & \text{on }(0,\infty )\times \partial
\Omega .%
\end{array}%
\right.  \label{eu_infty}
\end{equation}%
By means of the comparison principle we get $u(t,x)\leq \omega (t)$ for $%
t\in \lbrack 0,\infty )$, where $\omega (t)=\left \Vert u_{0}\right \Vert
_{C(\overline{\Omega })}e^{-\mu t}+\left( \frac{\Lambda }{\mu }\right)
(1-e^{-\mu t})$ is the unique solution of the problem%
\begin{equation}
\left \{
\begin{array}{ll}
\frac{d\omega }{dt}=\Lambda -\mu \omega , & t>0, \\
\omega (0)=\left \Vert u_{0}\right \Vert _{C(\overline{\Omega })}. &
\end{array}%
\right.  \label{eu_ode}
\end{equation}%
Consequently, for $x\in \overline{\Omega }$, we have%
\begin{equation*}
u(t,x)\leq \omega (t)\overset{t\rightarrow \infty }{\rightarrow }\frac{%
\Lambda }{\mu }.
\end{equation*}%
Therefore, we have an upper bound for $\left \Vert u(t,.)\right \Vert
_{L^{\infty }(\Omega )}$ independent of the initial conditions given a
sufficiently large $t$. By applying \cite[Lemma 3.1]{Peng2012} we find that $%
\left \Vert v(t,.)\right \Vert _{L^{\infty }(\Omega )}$ is also bounded by a
positive constant independent of the initial conditions for a large enough $%
t $.
\end{proof}

\begin{remark}
From Lemma \ref{Lemma1}, we obtain%
\begin{equation}
\begin{matrix}
\varphi (v)<\varphi ^{\prime }(0)e^{v}, & \forall v\in (0,\infty ).%
\end{matrix}%
\end{equation}%
Thus, we can establish the global existence of solutions for system (\ref%
{e1.1})--(\ref{e1.3}) and validate (\ref{ex_eq_1}) by following the work in\
\cite[Theorem 3.4]{Djebara2019}.
\end{remark}

\subsection{The Local PDE Stability}

We have already established sufficient conditions for the local asymptotic
stability of system (\ref{e1.5})--(\ref{e1.7}) in the ODE scenario. Let us
now examine the more general PDE case (\ref{e1.1})--(\ref{e1.3}).

\begin{theorem}
For system (\ref{e1.1})--(\ref{e1.3}):\newline
(i) If $R_{0}\leq 1$, the disease--free equilibrium$\ E_{0}$ $=\left( \frac{%
\Lambda }{\mu },0\right) $ is locally asymptotically stable.\newline
(ii) If $R_{0}>1$, the positive constant endemic steady equilibrium $E^{\ast
}$ $=\left( u^{\ast },v^{\ast }\right) $\ is locally asymptotically stable.
\end{theorem}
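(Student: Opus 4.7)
My plan is to carry out the standard spectral linearization for a reaction--diffusion system with homogeneous Neumann conditions, reducing the PDE stability question to a countable family of $2\times 2$ eigenvalue problems that we can handle via the Routh--Hurwitz criterion exactly as in the ODE case.

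Concretely, I would linearize the system (\ref{e1.1}) about a constant equilibrium $\bar E=(\bar u,\bar v)$ (either $E_0$ or $E^{\ast}$) and write the resulting linear operator as $\mathcal{L}=D\Delta + J(\bar E)$, where $D=\mathrm{diag}(d_1,d_2)$ and $J(\bar E)$ is the Jacobian from (\ref{e1.11}). Let $0=\mu_0<\mu_1\le\mu_2\le\cdots$ be the eigenvalues of $-\Delta$ on $\Omega$ under Neumann boundary conditions, with an orthonormal basis of eigenfunctions. Decomposing a perturbation along these eigenfunctions, the spectrum of $\mathcal{L}$ is the union over $k\ge 0$ of the spectra of the $2\times 2$ matrices
\begin{equation*}
M_k(\bar E)=J(\bar E)-\mu_k D.
\end{equation*}
Local asymptotic stability of $\bar E$ then follows once I show that every eigenvalue of every $M_k(\bar E)$ has negative real part. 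For $k=0$ this reduces to $J(\bar E)$, which the previous theorem has already handled, so all the work is in the modes $k\ge 1$.

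For the disease--free equilibrium, $J(E_0)$ is upper triangular, so $M_k(E_0)$ remains upper triangular, with eigenvalues $-\mu-d_1\mu_k$ and $\sigma(R_0-1)-d_2\mu_k$; when $R_0\le 1$ both are non-positive, and strictly negative for $k\ge 1$ when $d_2>0$, which gives the desired stability. For the endemic equilibrium, I plan to apply Routh--Hurwitz to $M_k(E^{\ast})$. The trace reads
\begin{equation*}
\mathrm{tr}\,M_k(E^{\ast})=\mathrm{tr}\,J(E^{\ast})-(d_1+d_2)\mu_k,
\end{equation*}
which is already negative from the ODE proof and is made only more negative by the diffusion term. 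For the determinant, I would expand
\begin{equation*}
\det M_k(E^{\ast})=\det J(E^{\ast})-\mu_k\bigl(d_1 J_{22}^{\ast}+d_2 J_{11}^{\ast}\bigr)+d_1 d_2\mu_k^{2},
\end{equation*}
and observe that $J_{11}^{\ast}=-\lambda\varphi(v^{\ast})-\mu<0$, while the identity $\sigma=\lambda u^{\ast}\varphi(v^{\ast})/v^{\ast}$ from (\ref{e1.12}) combined with (\ref{e1.5}) yields
\begin{equation*}
J_{22}^{\ast}=\lambda u^{\ast}\Bigl[\varphi'(v^{\ast})-\tfrac{\varphi(v^{\ast})}{v^{\ast}}\Bigr]\le 0.
\end{equation*}
Hence the linear coefficient in $\mu_k$ is nonnegative and $\det M_k(E^{\ast})\ge \det J(E^{\ast})+d_1 d_2\mu_k^{2}>0$ by the ODE computation.

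The main conceptual obstacle is simply setting up the spectral reduction cleanly on the Banach space from Proposition \ref{proposition_exist} and invoking the principle of linearized stability for analytic semigroups (via the generator $A=\mathrm{diag}(A_1,A_2)$ already used above); once that machinery is in place, the Routh--Hurwitz step for each Fourier mode is essentially free because of the favorable signs of $J_{11}^{\ast}$ and $J_{22}^{\ast}$. A minor technical subtlety is the borderline case $R_0=1$ at $E_0$, where the mode $k=0$ produces a zero eigenvalue; I would note that for $k\ge 1$ strict negativity still holds, and the asymptotic behavior at $k=0$ is inherited from the ODE analysis already established in the previous theorem.
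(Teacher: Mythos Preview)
Your proposal is correct and follows essentially the same route as the paper: the spectral decomposition into matrices $M_k(\bar E)=J(\bar E)-\mu_k D$, the upper-triangular argument for $E_0$, and the Routh--Hurwitz check for $E^{\ast}$ are exactly what the paper does, with your linear coefficient $-(d_1J_{22}^{\ast}+d_2J_{11}^{\ast})$ coinciding with the paper's quantity $H_0$ and shown positive by the same use of (\ref{e1.5}) and (\ref{e1.12}). Your remark on the borderline case $R_0=1$ is, if anything, more careful than the paper's treatment.
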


\begin{proof}
(i) In the presence of diffusion, the equilibrium point $E_{0}$ $=\left(
\frac{\Lambda }{\mu },0\right) $ satisfies%
\begin{equation*}
\left \{
\begin{array}{l}
d_{1}\Delta u+\Lambda -\lambda u^{\ast }\varphi \left( v^{\ast }\right) -\mu
u^{\ast }=0, \\
d_{2}\Delta v+\lambda u^{\ast }\varphi \left( v^{\ast }\right) -\sigma
v^{\ast }=0,%
\end{array}%
\right.
\end{equation*}%
with Neumann boundaries%
\begin{equation*}
\dfrac{\partial u}{\partial \nu }=\dfrac{\partial v}{\partial \nu }=0\  \text{%
on }\mathbb{R}^{+}\times \partial \Omega .
\end{equation*}%
We denote the indefinite sequence of positive eigenvalues for the Laplacian
operator $\Delta $ over $\Omega $ with Neumann boundary conditions by $%
0=\lambda _{0}<\lambda _{1}\leq \lambda _{2}\leq \lambda _{3}\leq
...\nearrow ^{+\infty }$. Note that the first eigenfunction is a constant,
which is why the corresponding eigenvalue is equal to zero. The
corresponding sequence of eigenfunctions is denoted by $\left( \Phi
_{ij}\right) _{j=\overline{1,m}_{i}}$, where $m_{i}\geq 1$ $\mathtt{is}$ the
algebraic multiplicity of $\lambda _{i}$. These functions are the solutions
of%
\begin{equation*}
\left \{
\begin{array}{l}
-\Delta \Phi _{ij}=\lambda _{i}\Phi _{ij}\text{ \ in }\Omega , \\
\dfrac{\partial \Phi _{ij}}{\partial \nu }=0\text{ \ on }\partial \Omega .%
\end{array}%
\right.
\end{equation*}%
The eigenfunctions are normalized according to%
\begin{equation*}
\left \Vert \Phi _{ij}\right \Vert _{2}=\int_{\Omega }\Phi _{ij}^{2}\left(
x\right) dx=1.
\end{equation*}%
The set of eigenfunctions $\left \{ \Phi _{ij}:i\geq 0,j=\overline{1,m}%
_{i}\right \} $ forms a complete orthonormal basis in $L^{2}\left( \Omega
\right) $. In order to establish the local asymptotic stability of the
steady states, we must examine all the eigenvalues of the linearizing
operator and if they all have negative real parts, then the solution is
locally asymptotically stable. The linearizing operator may be given by%
\begin{equation*}
\mathcal{L}\left( E_{0}\right) \mathcal{=}\left(
\begin{tabular}{ll}
$d_{1}\Delta -\mu $ & $-\lambda \frac{\Lambda }{\mu }\varphi ^{\prime
}\left( 0\right) $ \\
$0$ & $d_{2}\Delta +\lambda \frac{\Lambda }{\mu }\varphi ^{\prime }\left(
0\right) -\sigma $%
\end{tabular}%
\right) .
\end{equation*}%
Using the same method from \cite{Abdelmalek2016}, the stability of $E_{0}$
reduces to examining the eigenvalues of the matrices%
\begin{equation*}
J_{i}\left( E_{0}\right) \mathcal{=}\left(
\begin{tabular}{ll}
$-d_{1}\lambda _{i}-\mu $ & $-\lambda \frac{\Lambda }{\mu }\varphi ^{\prime
}\left( 0\right) $ \\
$0$ & $-d_{2}\lambda _{i}+\lambda \frac{\Lambda }{\mu }\varphi ^{\prime
}\left( 0\right) -\sigma $%
\end{tabular}%
\right) \text{ for all }i\geq 0,
\end{equation*}%
which are given for all $i\geq 0$ by%
\begin{equation*}
\left \{
\begin{array}{l}
r_{i1}=-d_{1}\lambda _{i}-\mu , \\
r_{i2}=-d_{2}\lambda _{i}+\lambda \frac{\Lambda }{\mu }\varphi ^{\prime
}\left( 0\right) -\sigma .%
\end{array}%
\right.
\end{equation*}%
Since the Laplacian eigenvalues are positive and in ascending order, both $%
r_{i1}$ and $r_{i2}$ clearly\ have negative real parts for $R_{0}\leq 1$
leading to the local stability of $E_{0}$ $=\left( \frac{\Lambda }{\mu }%
,0\right) $.

(ii) The second steady state $E^{\ast }=\left( u^{\ast },v^{\ast }\right) $
satisfies%
\begin{equation*}
\left \{
\begin{array}{l}
d_{1}\Delta u+\Lambda -\lambda u^{\ast }\varphi \left( v^{\ast }\right) -\mu
u^{\ast }=0, \\
d_{2}\Delta v+\lambda u^{\ast }\varphi \left( v^{\ast }\right) -\sigma
v^{\ast }=0, \\
\dfrac{\partial u}{\partial \nu }=\dfrac{\partial v}{\partial \nu }=0\  \text{%
on }\mathbb{R}^{+}\times \partial \Omega .%
\end{array}%
\right.
\end{equation*}

The corresponding linearization operator is%
\begin{equation*}
\mathcal{L}\left( E^{\ast }\right) =\left(
\begin{tabular}{ll}
$d_{1}\Delta -\lambda \varphi \left( v^{\ast }\right) -\mu $ & $-\lambda
u^{\ast }\varphi ^{\prime }\left( v^{\ast }\right) $ \\
$\lambda \varphi \left( v^{\ast }\right) $ & $d_{2}\Delta +\lambda u^{\ast
}\varphi ^{\prime }\left( v^{\ast }\right) -\sigma $%
\end{tabular}%
\right) .
\end{equation*}%
Hence, the stability of $E^{\ast }$ rests on the negativity of the real
parts of the eigenvalues of matrices%
\begin{equation*}
J_{i}\left( E^{\ast }\right) =\left(
\begin{tabular}{ll}
$-d_{1}\lambda _{i}-\lambda \varphi \left( v^{\ast }\right) -\mu $ & $%
-\lambda u^{\ast }\varphi ^{\prime }\left( v^{\ast }\right) $ \\
$\lambda \varphi \left( v^{\ast }\right) $ & $-d_{2}\lambda _{i}+\lambda
u^{\ast }\varphi ^{\prime }\left( v^{\ast }\right) -\sigma $%
\end{tabular}%
\right) ,\text{ for all }i\geq 0,
\end{equation*}%
which is guaranteed if the trace and determinant of $J_{i}\left( E^{\ast
}\right) $ satisfies the conditions $ tr(J_{i}\left( E^{\ast }\right) )<0$
and $\det \left( J_{i}\left( E^{\ast }\right) \right) >0,$ for all $i\geq 0$%
. The trace of $J_{i}\left( E^{\ast }\right) $ is given by%
\begin{eqnarray*}
tr(J_{i}\left( E^{\ast }\right) ) &=&-d_{1}\lambda _{i}-d_{2}\lambda
_{i}-\lambda \varphi \left( v^{\ast }\right) -\mu +\lambda u^{\ast }\varphi
^{\prime }\left( v^{\ast }\right) -\sigma , \\
&=&-\lambda _{i}\left( d_{1}+d_{2}\right) +tra(J\left( u^{\ast },v^{\ast
}\right) ).
\end{eqnarray*}%
Since $tr(J\left( u^{\ast },v^{\ast }\right) )<0$, it follows that $tr%
(J_{i}\left( E^{\ast }\right) )<0$ for all $i\geq 0$. The determinant is
given by%
\begin{eqnarray*}
\det \left( J_{i}\left( E^{\ast }\right) \right) &=&d_{1}d_{2}\lambda
_{i}^{2}+\lambda _{i}\left[ -d_{1}\lambda u^{\ast }\varphi ^{\prime }\left(
v^{\ast }\right) +d_{1}\sigma +\lambda \varphi \left( v^{\ast }\right)
d_{2}+\mu d_{2}\right] \\
&&+\lambda \sigma \varphi \left( v^{\ast }\right) +\mu \sigma -\mu \lambda
u^{\ast }\varphi ^{\prime }\left( v^{\ast }\right) , \\
&=&d_{1}d_{2}\lambda _{i}^{2}+\lambda _{i}H_{0}+\det \left( J\left( u^{\ast
},v^{\ast }\right) \right) \text{ for all }i\geq 0,
\end{eqnarray*}%
where%
\begin{equation*}
H_{0}=-d_{1}\lambda u^{\ast }\varphi ^{\prime }\left( v^{\ast }\right)
+d_{1}\sigma +\lambda \varphi \left( v^{\ast }\right) d_{2}+\mu d_{2}.
\end{equation*}%
Using$\ $(\ref{e1.5}) and (\ref{e1.12}), it holds that%
\begin{eqnarray*}
H_{0} &\geq &-d_{1}\lambda u^{\ast }\frac{\varphi \left( v^{\ast }\right) }{%
v^{\ast }}+d_{1}\frac{\lambda u^{\ast }\varphi \left( v^{\ast }\right) }{%
v^{\ast }}+\lambda \varphi \left( v^{\ast }\right) d_{2}+\mu d_{2}, \\
&=&d_{2}\left( \lambda \varphi \left( v^{\ast }\right) +\mu \right) , \\
&=&d_{2}\frac{\Lambda }{u^{\ast }}>0,
\end{eqnarray*}%
which leads to%
\begin{equation*}
\det \left( J_{i}\left( E^{\ast }\right) \right) =d_{1}d_{2}\lambda
_{i}^{2}+\lambda _{i}H_{0}+\det \left( J\left( u^{\ast },v^{\ast }\right)
\right) >0\text{ for all }i\geq 0.
\end{equation*}%
Hence, $E^{\ast }$ is locally asymptotically stable.
\end{proof}

\section{Global asymptotic stability\label{SecGlobal}}

Next, we study the global asymptotic stability of the two steady states $%
E_{0}$ and $E^{\ast }$. The global stability depends on the reproduction
number $R_{0}$, which is why we have decided to treat the scenarios $%
R_{0}\leq 1$\ and $R_{0}>1$\ separately. First, however, let us state a
necessary lemma taken from \cite{Sigdel2014} that will aid with the proofs
to come.

\begin{lemma}
\label{Lemma 2}Given that\ $\varphi $ satisfies criterion (\ref{e1.5}) and%
\begin{equation}
L\left( x\right) =x-1-\ln \left( x\right) \text{, for all }x>0,
\label{e1.18}
\end{equation}%
the inequality%
\begin{equation}
L\left( \frac{\varphi (v)}{\varphi (v^{\ast })}\right) \leq L\left( \frac{v}{%
v^{\ast }}\right) ,  \label{e1.14}
\end{equation}%
where $v^{\ast }$ is the second component of the equilibrium point $E^{\ast
} $, holds.
\end{lemma}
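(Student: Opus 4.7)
The plan is to exploit the monotonicity properties that condition (\ref{e1.5}) forces on $\varphi$ and then combine them with the monotonicity structure of $L$. Recall that $L'(x) = 1 - 1/x$, so $L$ is strictly decreasing on $(0,1)$ and strictly increasing on $(1,\infty)$, with $L(1)=0$ the unique minimum. Thus to compare $L(\varphi(v)/\varphi(v^{\ast}))$ and $L(v/v^{\ast})$, it suffices to show that the two arguments lie on the same side of $1$, with $\varphi(v)/\varphi(v^{\ast})$ always \emph{closer} to $1$ than $v/v^{\ast}$.

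First I would extract two consequences of (\ref{e1.5}). From $v\varphi'(v)>0$ with $v>0$ I get $\varphi'(v)>0$, so $\varphi$ is strictly increasing on $(0,\infty)$; combined with $\varphi(0)=0$ from (\ref{e1.4}) this also gives $\varphi(v)>0$ for $v>0$. From $v\varphi'(v)\le \varphi(v)$ I get
\begin{equation*}
\frac{d}{dv}\!\left(\frac{\varphi(v)}{v}\right)=\frac{v\varphi'(v)-\varphi(v)}{v^{2}}\le 0,
\end{equation*}
so $\varphi(v)/v$ is non-increasing on $(0,\infty)$.

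Next I would split into the obvious cases. If $v=v^{\ast}$ both sides of (\ref{e1.14}) equal zero. If $v>v^{\ast}$, then strict monotonicity of $\varphi$ gives $\varphi(v)/\varphi(v^{\ast})>1$, while the monotonicity of $\varphi(v)/v$ gives $\varphi(v)/v \le \varphi(v^{\ast})/v^{\ast}$, i.e. $\varphi(v)/\varphi(v^{\ast}) \le v/v^{\ast}$. Hence
\begin{equation*}
1<\frac{\varphi(v)}{\varphi(v^{\ast})}\le \frac{v}{v^{\ast}},
\end{equation*}
and since $L$ is increasing on $(1,\infty)$, inequality (\ref{e1.14}) follows. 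If $v<v^{\ast}$, the same two properties reverse to give $\varphi(v)/\varphi(v^{\ast})<1$ and $\varphi(v)/\varphi(v^{\ast})\ge v/v^{\ast}$, so
\begin{equation*}
\frac{v}{v^{\ast}}\le \frac{\varphi(v)}{\varphi(v^{\ast})}<1,
\end{equation*}
and since $L$ is decreasing on $(0,1)$ we again obtain (\ref{e1.14}).

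There is no real obstacle here; the only slightly non-obvious step is recognizing that condition (\ref{e1.5}) is precisely what is needed to make $\varphi(v)/v$ non-increasing, which in turn traps $\varphi(v)/\varphi(v^{\ast})$ between $1$ and $v/v^{\ast}$. Once this is observed, the conclusion is immediate from the shape of $L$.
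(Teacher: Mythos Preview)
Your proof is correct and follows essentially the same approach as the paper: both arguments use condition~(\ref{e1.5}) to conclude that $\varphi(v)/v$ is non-increasing and $\varphi$ is increasing, then split into the cases $v\ge v^{\ast}$ and $v<v^{\ast}$ to trap $\varphi(v)/\varphi(v^{\ast})$ between $1$ and $v/v^{\ast}$ and apply the monotonicity of $L$ on each side of $1$. Your version is in fact slightly more explicit in deriving the monotonicity of $L$ and of $\varphi(v)/v$.
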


\begin{proof}
Condition (\ref{e1.5}) guarantees that $\frac{\varphi \left( v\right) }{v}$
is a decreasing function for all $v>0$. We may separate the proof into two
regions:

\begin{itemize}
\item[1.] The first region is $v\geq v^{\ast }$. Since $\frac{\varphi \left(
v\right) }{v}$ is a decreasing function, we have%
\begin{equation*}
\frac{\varphi \left( v\right) }{\varphi \left( v^{\ast }\right) }\leq \frac{v%
}{v^{\ast }}.
\end{equation*}%
Note that (\ref{e1.5}) implies that $\varphi $ is non decreasing, which
leads to%
\begin{equation*}
\varphi \left( v\right) \geq \varphi \left( v^{\ast }\right) ,
\end{equation*}%
and, consequently,%
\begin{equation*}
1\leq \frac{\varphi \left( v\right) }{\varphi \left( v^{\ast }\right) }\leq
\frac{v}{v^{\ast }}.
\end{equation*}%
Since $L$ is increasing for all $x>1$, (\ref{e1.14}) follows.%
\begin{equation*}
L\left( \frac{\varphi (v)}{\varphi (v^{\ast })}\right) \leq L\left( \frac{v}{%
v^{\ast }}\right)
\end{equation*}
for all $v\geq v^{\ast }$.

\item[2.] The second region is $0<v<v^{\ast }$. Again, since $\frac{\varphi
\left( v\right) }{v}$ is a decreasing function, we have%
\begin{equation*}
\frac{\varphi \left( v\right) }{\varphi \left( v^{\ast }\right) }>\frac{v}{%
v^{\ast }},
\end{equation*}%
and given the non--decreasing nature of $\varphi $, we end up with%
\begin{equation*}
\varphi \left( v\right) <\varphi \left( v^{\ast }\right) .
\end{equation*}%
As a result, we get%
\begin{equation*}
1>\frac{\varphi \left( v\right) }{\varphi \left( v^{\ast }\right) }>\frac{v}{%
v^{\ast }}>0.
\end{equation*}%
Since $L$ is decreasing for $0<x<1$, (\ref{e1.14}) holds.
\end{itemize}
\end{proof}

\subsection{Global Asymptotic Stability with $R_{0}\leq 1$}

To establish the global asymptotic stability of the disease free equilibrium
$E_{0}$, we consider%
\begin{equation*}
\mathcal{V}_{\theta }\left( t\right) =\int_{\Omega }\left[ uv+\frac{\theta }{%
2}\left( u-\tfrac{\Lambda }{\mu }\right) ^{2}+\frac{1}{2}v^{2}+\tfrac{%
\Lambda }{\sigma }v\right] dx,\text{ where }\theta >0,
\end{equation*}%
as our candidate Lyapunov function. The aim of this subsection is to obtain
a weaker condition than that of \cite{Djebara2019}.

\begin{theorem}
\label{TheoG1}If $R_{0}\leq 1$, $E_{0}$ is a globally asymptotically stable
disease--free steady state for system (\ref{e1.1})--(\ref{e1.3}) under the
assumption%
\begin{equation}
\varphi ^{\prime }\left( 0\right) \  \leq \frac{\mu +\sigma }{\lambda \left(
\theta \frac{\Lambda }{\mu }+\frac{\Lambda }{\sigma }\right) },
\label{e1.15}
\end{equation}%
with%
\begin{equation}
\theta \geq \frac{\left( d_{1}+d_{2}\right) ^{2}}{4d_{1}d_{2}}.
\label{e1.16}
\end{equation}
\end{theorem}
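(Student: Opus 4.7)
The plan is to differentiate $\mathcal{V}_\theta$ along trajectories of (\ref{e1.1})--(\ref{e1.3}), substitute $u_t$ and $v_t$ from the PDE, integrate by parts on the resulting Laplacian terms (boundary integrals vanish by the Neumann condition (\ref{e1.3})), and then split the integrand into a \emph{diffusion} quadratic form in $\nabla u, \nabla v$ and a purely algebraic \emph{reaction} polynomial in $u, v, \varphi(v)$. The goal is to show each piece is nonpositive, with total equality forcing $(u,v)\equiv E_0$; global asymptotic stability will then follow by the Lyapunov--LaSalle invariance principle, legitimate here because Proposition~\ref{proposition_exist} supplies uniformly bounded trajectories.

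For the diffusion part, the summand $\tfrac{\Lambda}{\sigma}v$ contributes nothing (its Laplacian integrates to zero under Neumann BC), and the remaining three summands produce
\begin{equation*}
-\theta d_1 \int_\Omega |\nabla u|^2\,dx \;-\; d_2 \int_\Omega |\nabla v|^2\,dx \;-\; (d_1+d_2)\int_\Omega \nabla u\cdot \nabla v\,dx,
\end{equation*}
where the cross term arises from $\int uv\,dx$. Viewed pointwise as a quadratic form in $(\partial_i u,\partial_i v)$ for each coordinate $i$, negative semidefiniteness is equivalent to the discriminant condition $(d_1+d_2)^2 \le 4\theta d_1 d_2$, which is exactly (\ref{e1.16}); as an immediate byproduct, AM--GM gives $\theta \ge 1$. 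For the reaction part, straightforward expansion of $vF+uG$, $\theta(u-\Lambda/\mu)F$, $vG$ and $\tfrac{\Lambda}{\sigma}G$ produces pairwise cancellation of the $\Lambda v$ and $\lambda uv\varphi(v)$ contributions, leaving
\begin{equation*}
\lambda(1-\theta)u^2\varphi(v) \;-\; \theta\mu\bigl(u-\tfrac{\Lambda}{\mu}\bigr)^2 \;-\; (\mu+\sigma)uv \;-\; \sigma v^2 \;+\; \lambda\bigl(\theta\tfrac{\Lambda}{\mu}+\tfrac{\Lambda}{\sigma}\bigr)u\varphi(v).
\end{equation*}
Since $\theta \ge 1$ the first summand is $\le 0$ and may be discarded; applying $\varphi(v)\le \varphi'(0)v$ from Lemma~\ref{Lemma1} bounds $u\varphi(v)\le \varphi'(0)uv$; and hypothesis (\ref{e1.15}) then makes the resulting $uv$ coefficient nonpositive, so the integrand is majorised by $-\theta\mu(u-\Lambda/\mu)^2-\sigma v^2$.

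The main obstacle I anticipate is the algebraic bookkeeping in this expansion: the $\lambda uv\varphi(v)$ terms arising from $vF+uG$ and from $vG$ must cancel exactly, and the surviving $\varphi(v)$-carrying contributions must split cleanly into one piece controlled by $\theta\ge 1$ and one piece controlled by Lemma~\ref{Lemma1} combined with (\ref{e1.15}). Once this cancellation is verified and the diffusion discriminant condition is identified with (\ref{e1.16}), the remainder is routine: $\mathcal{V}_\theta'(t)\le 0$ with equality only when $v\equiv 0$ and $u\equiv \Lambda/\mu$, i.e.\ at $E_0$, and the invariance principle delivers global asymptotic stability.
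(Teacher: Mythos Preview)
Your proposal is correct and follows essentially the same route as the paper: the same Lyapunov functional $\mathcal{V}_\theta$, the same split into a gradient quadratic form (controlled by the discriminant condition (\ref{e1.16}), which also yields $\theta\ge 1$) and a reaction remainder, and the same use of $\theta\ge 1$ to discard $\lambda(1-\theta)u^2\varphi(v)$ followed by Lemma~\ref{Lemma1} and (\ref{e1.15}) to make the $uv$ coefficient nonpositive. The only cosmetic difference is that you invoke LaSalle explicitly where the paper appeals to Lyapunov's direct method.
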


\begin{proof}
To prove that $E_{0}=\left( \frac{\Lambda }{\mu },0\right) $ is globally
asymptotically stable, we must show that $\mathcal{V}_{\theta }\left(
t\right) $ is a Lyapunov function. The positive definiteness of $\mathcal{V}%
_{\theta }\left( t\right) $ is evident. Evaluating its derivative with
respect to time gives%
\begin{equation*}
\frac{d}{dt}\mathcal{V}_{\theta }\left( t\right) =\int_{\Omega }\left(
\tfrac{\partial u}{\partial t}v+u\tfrac{\partial v}{\partial t}\right)
dx+\theta \int_{\Omega }\left( u-\tfrac{\Lambda }{\mu }\right) \tfrac{%
\partial u}{\partial t}dx+\int_{\Omega }v\tfrac{\partial v}{\partial t}dx+%
\tfrac{\Lambda }{\sigma }\int_{\Omega }\tfrac{\partial v}{\partial t}dx.
\end{equation*}%
Substituting the partial derivatives $\tfrac{\partial u}{\partial t}$ and $%
\tfrac{\partial v}{\partial t}$ with their respective values from (\ref{e1.1}%
) and applying Green's formula with the assumed Neumann boundaries leads to%
\begin{equation*}
\frac{d}{dt}\mathcal{V}_{\theta }\left( t\right) =I_{1}+I_{2}+I_{3},
\end{equation*}%
where%
\begin{equation*}
I_{1}=-\left( d_{1}+d_{2}\right) \int_{\Omega }\nabla u\nabla vdx+\Lambda
\int_{\Omega }vdx-\lambda \int_{\Omega }uv\varphi \left( v\right) dx+\lambda
\int_{\Omega }u^{2}\varphi \left( v\right) dx-\left( \sigma +\mu \right)
\int_{\Omega }uvdx,
\end{equation*}%
\begin{equation*}
I_{2}=-d_{1}\theta \int_{\Omega }\left \vert \nabla u\right \vert ^{2}dx-\mu
\theta \int_{\Omega }\left( u-\tfrac{\Lambda }{\mu }\right) ^{2}dx-\lambda
\theta \int_{\Omega }u^{2}\varphi \left( v\right) dx+\lambda \theta \frac{%
\Lambda }{\mu }\int_{\Omega }u\varphi \left( v\right) dx,
\end{equation*}%
and%
\begin{equation*}
I_{3}=-d_{2}\int_{\Omega }\left \vert \nabla v\right \vert ^{2}dx+\lambda
\int_{\Omega }uv\varphi \left( v\right) dx-\sigma \int_{\Omega
}v^{2}dx+\lambda \tfrac{\Lambda }{\sigma }\int_{\Omega }u\varphi \left(
v\right) dx-\Lambda \int_{\Omega }vdx.
\end{equation*}%
Taking the sum of terms $I_{1}$, $I_{2}$, and $I_{3}$ yields%
\begin{eqnarray*}
\frac{d}{dt}\mathcal{V}_{\theta }\left( t\right) &=&-d_{1}\theta
\int_{\Omega }\left \vert \nabla u\right \vert ^{2}dx-\left(
d_{1}+d_{2}\right) \int_{\Omega }\nabla u\nabla vdx-d_{2}\int_{\Omega }\left
\vert \nabla v\right \vert ^{2}dx \\
&&+\lambda \left( 1-\theta \right) \int_{\Omega }u^{2}\varphi \left(
v\right) dx-\left( \sigma +\mu \right) \int_{\Omega }uvdx-\mu \theta
\int_{\Omega }\left( u-\tfrac{\Lambda }{\mu }\right) ^{2}dx \\
&&-\sigma \int_{\Omega }v^{2}dx+\lambda \left( \theta \frac{\Lambda }{\mu }+%
\tfrac{\Lambda }{\sigma }\right) \int_{\Omega }u\varphi \left( v\right) dx.
\\
&=&I+J,
\end{eqnarray*}

where%
\begin{equation*}
I=-d_{1}\theta \int_{\Omega }\left \vert \nabla u\right \vert ^{2}dx-\left(
d_{1}+d_{2}\right) \int_{\Omega }\nabla u\nabla vdx-d_{2}\int_{\Omega }\left
\vert \nabla v\right \vert ^{2}dx,
\end{equation*}%
and%
\begin{eqnarray*}
J &=&\lambda \left( 1-\theta \right) \int_{\Omega }u^{2}\varphi \left(
v\right) dx+\lambda \left( \theta \frac{\Lambda }{\mu }+\tfrac{\Lambda }{%
\sigma }\right) \int_{\Omega }u\varphi \left( v\right) dx \\
&&\  \  \  \  \  \  \  \  \  \  \  \  \  \  \  \ -\left( \sigma +\mu \right) \int_{\Omega
}uvdx-\mu \theta \int_{\Omega }\left( u-\tfrac{\Lambda }{\mu }\right)
^{2}dx-\sigma \int_{\Omega }v^{2}dx.
\end{eqnarray*}%
The first term may be written as%
\begin{equation*}
I=-\int_{\Omega }\emph{Q}\left( \nabla u,\nabla v\right) dx,
\end{equation*}%
where $\emph{Q}$ is a quadratic form with respect to $\nabla u$ and $\nabla
v $, i.e.%
\begin{equation*}
\emph{Q}\left( \nabla u,\nabla v\right) =d_{1}\theta \left \vert \nabla
u\right \vert ^{2}+\left( d_{1}+d_{2}\right) \nabla u\nabla v+d_{2}\left
\vert \nabla v\right \vert ^{2}.
\end{equation*}%
It is easy to see that $\emph{Q}\left( \nabla u,\nabla v\right) $ is
non--negative as $\theta $, $d_{1}$, and $d_{2}$ satisfy the conditions $%
d_{1}\theta >0$ and $\  \theta \geq \frac{\left( d_{1}+d_{2}\right) ^{2}}{%
4d_{1}d_{2}}$, from which we obtain $I\leq 0$.

On the other hand, using the inequality$\  \theta \geq \frac{\left(
d_{1}+d_{2}\right) ^{2}}{4d_{1}d_{2}}\geq 1$, we have%
\begin{equation*}
J\leq \lambda \left( \theta \frac{\Lambda }{\mu }+\tfrac{\Lambda }{\sigma }%
\right) \int_{\Omega }u\varphi \left( v\right) dx-\left( \sigma +\mu \right)
\int_{\Omega }uvdx-\mu \theta \int_{\Omega }\left( u-\tfrac{\Lambda }{\mu }%
\right) ^{2}dx-\sigma \int_{\Omega }v^{2}dx.
\end{equation*}%
Applying Lemma~\ref{Lemma1} yields%
\begin{equation}
J\leq \int_{\Omega }\left[ \lambda \left( \theta \tfrac{\Lambda }{\mu }+%
\tfrac{\Lambda }{\sigma }\right) \varphi ^{\prime }\left( 0\right) -\left(
\mu +\sigma \right) \right] uvdx-\mu \theta \int_{\Omega }\left( u-\tfrac{%
\Lambda }{\mu }\right) ^{2}dx-\sigma \int_{\Omega }v^{2}dx.
\end{equation}%
Assuming (\ref{e1.15}) holds, the derivative $\frac{d}{dt}\mathcal{V}%
_{\theta }\left( t\right) \leq 0$ for all $t\geq 0$ with $\mathcal{V}%
_{\theta }\left( t\right) =0$ being true when $\left( u,v\right) =\left(
\frac{\Lambda }{\mu },0\right) $. Finally, by Lyapunov's direct method, $%
E_{0}$ is globally asymptotically stable.
\end{proof}

\subsection{Global Asymptotic Stability with $R_{0}>1$}

\begin{theorem}
\label{TheoG2}If $R_{0}>1$, $E^{\ast }$ is a globally asymptotically stable
endemic steady--state for system (\ref{e1.1})--(\ref{e1.3}).
\end{theorem}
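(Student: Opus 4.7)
The plan is to use a Goh--Volterra type Lyapunov functional, the standard choice for endemic equilibria of SIS/SIR models with nonlinear incidence $u\varphi(v)$. Specifically, I would define
\begin{equation*}
\mathcal{V}(t)=\int_{\Omega}\left[\left(u-u^{\ast}-u^{\ast}\ln\frac{u}{u^{\ast}}\right)+\left(v-v^{\ast}-v^{\ast}\ln\frac{v}{v^{\ast}}\right)\right]dx,
\end{equation*}
which can be written compactly as $\int_{\Omega}\bigl[u^{\ast}L(u/u^{\ast})+v^{\ast}L(v/v^{\ast})\bigr]\,dx$ with $L$ as in \eqref{e1.18}. Nonnegativity and the fact that $\mathcal{V}=0$ exactly when $(u,v)\equiv E^{\ast}$ follow from $L(x)\geq 0$ with equality at $x=1$. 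Proposition~\ref{proposition_exist} ensures the solution stays in a region where $u,v>0$ after a finite time, so $\mathcal{V}$ is well defined along trajectories.

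Differentiating along \eqref{e1.1} gives $\tfrac{d}{dt}\mathcal{V}=\int_{\Omega}\bigl(1-u^{\ast}/u\bigr)u_{t}\,dx+\int_{\Omega}\bigl(1-v^{\ast}/v\bigr)v_{t}\,dx$. I would then handle the two groups of terms separately. For the diffusion parts, Green's identity combined with the homogeneous Neumann conditions \eqref{e1.3} yields the nonpositive contribution
\begin{equation*}
-d_{1}u^{\ast}\int_{\Omega}\frac{|\nabla u|^{2}}{u^{2}}dx-d_{2}v^{\ast}\int_{\Omega}\frac{|\nabla v|^{2}}{v^{2}}dx\leq 0.
\end{equation*}
For the reaction parts I would substitute the endemic identities \eqref{e1.12}, namely $\Lambda=\mu u^{\ast}+\lambda u^{\ast}\varphi(v^{\ast})$ and $\sigma=\lambda u^{\ast}\varphi(v^{\ast})/v^{\ast}$, so as to express $\Lambda-\mu u$ and $-\sigma v$ in terms of deviations from $u^{\ast},v^{\ast}$. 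A short calculation then collects the reaction contribution into
\begin{equation*}
-\mu\int_{\Omega}\frac{(u-u^{\ast})^{2}}{u}dx+\lambda u^{\ast}\varphi(v^{\ast})\int_{\Omega}\left[2-\frac{u^{\ast}}{u}-\frac{u\varphi(v)v^{\ast}}{u^{\ast}\varphi(v^{\ast})v}+\frac{\varphi(v)}{\varphi(v^{\ast})}-\frac{v}{v^{\ast}}\right]dx.
\end{equation*}

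The main obstacle is controlling the bracketed integrand; this is where the nonlinear incidence forces us to leave the bilinear framework. The key algebraic rearrangement I would perform is to add and subtract logarithms so that the bracket becomes
\begin{equation*}
-L\!\left(\frac{u^{\ast}}{u}\right)-L\!\left(\frac{u\varphi(v)v^{\ast}}{u^{\ast}\varphi(v^{\ast})v}\right)+L\!\left(\frac{\varphi(v)}{\varphi(v^{\ast})}\right)-L\!\left(\frac{v}{v^{\ast}}\right).
\end{equation*}
The first two terms are $\leq 0$ because $L\geq 0$, while the combination of the last two is $\leq 0$ precisely by Lemma~\ref{Lemma 2}. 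This is the crucial point at which hypothesis \eqref{e1.5} enters. Combining everything yields $\tfrac{d}{dt}\mathcal{V}\leq 0$ along orbits, with equality forcing $\nabla u\equiv\nabla v\equiv 0$, $u\equiv u^{\ast}$, and $v\equiv v^{\ast}$.

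To finish, I would apply LaSalle's invariance principle in the PDE setting: Proposition~\ref{proposition_exist} supplies the compactness/boundedness of the orbit needed for the $\omega$-limit set to be nonempty and contained in the largest invariant subset of $\{\tfrac{d}{dt}\mathcal{V}=0\}$, which by the above reduces to the singleton $\{E^{\ast}\}$. Combined with the local asymptotic stability of $E^{\ast}$ already established, this gives global asymptotic stability whenever $R_{0}>1$. Notably, unlike Theorem~\ref{TheoG1}, no additional smallness condition on $\varphi'(0)$ or the diffusion coefficients is needed because the Goh--Volterra structure dissipates the cross-diffusion term $\nabla u\cdot\nabla v$ automatically.
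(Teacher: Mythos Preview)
Your proposal is correct and follows essentially the same route as the paper: the same Goh--Volterra functional $\mathcal{V}(t)=\int_{\Omega}\bigl[u^{\ast}L(u/u^{\ast})+v^{\ast}L(v/v^{\ast})\bigr]\,dx$, the same diffusion/reaction split, the same algebraic recasting of the reaction bracket in terms of $L$, and the same appeal to Lemma~\ref{Lemma 2} at the decisive step. Your closure via LaSalle's invariance principle (with compactness supplied by Proposition~\ref{proposition_exist}) is a slightly more explicit finish than the paper's invocation of Lyapunov's direct method, but the argument is otherwise identical.
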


\begin{proof}
For this proof, we consider the candidate Lyapunov function
\begin{equation}
\mathcal{V}\left( t\right) =\int_{\Omega }\left[ u^{\ast }L\left( \frac{u}{%
u^{\ast }}\right) +v^{\ast }L\left( \frac{v}{v^{\ast }}\right) \right] dx,
\label{e1.19}
\end{equation}%
which is a positive definite and continuously differentiable function.
First, note that%
\begin{equation*}
\frac{d}{dt}L\left( \frac{u}{u^{\ast }}\right) =\frac{1}{u^{\ast }}\left( 1-%
\frac{u^{\ast }}{u}\right) \frac{du}{dt}.
\end{equation*}%
Differentiating $\mathcal{V}\left( t\right) $ with respect to time yields%
\begin{equation*}
\frac{d}{dt}\mathcal{V}\left( t\right) =\int_{\Omega }\left( 1-\frac{u^{\ast
}}{u}\right) \left( d_{1}\Delta u+\Lambda -\lambda u\varphi \left( v\right)
-\mu u\right) dx+\int_{\Omega }\left( 1-\frac{v^{\ast }}{v}\right) \left(
d_{2}\Delta v+\lambda u\varphi \left( v\right) -\sigma v\right) dx.
\end{equation*}%
Similar to the previous scenario, we apply Green's formula with Neumann
boundaries to expand the derivative to%
\begin{eqnarray*}
\frac{d}{dt}\mathcal{V}\left( t\right) &=&-d_{1}\int_{\Omega }\nabla \left(
1-\frac{u^{\ast }}{u}\right) \nabla udx+\Lambda \int_{\Omega }\left( 1-\frac{%
u^{\ast }}{u}\right) dx-\lambda \int_{\Omega }\left( 1-\frac{u^{\ast }}{u}%
\right) u\varphi \left( v\right) dx \\
&&-\mu \int_{\Omega }\left( 1-\frac{u^{\ast }}{u}\right)
udx-d_{2}\int_{\Omega }\nabla \left( 1-\frac{v^{\ast }}{v}\right) \nabla
vdx+\lambda \int_{\Omega }\left( 1-\frac{v^{\ast }}{v}\right) u\varphi
\left( v\right) dx \\
&&-\sigma \int_{\Omega }\left( 1-\frac{v^{\ast }}{v}\right) vdx \\
&=&I+J,
\end{eqnarray*}%
where%
\begin{equation}
I=-d_{1}\int_{\Omega }\frac{u^{\ast }}{u^{2}}\left \vert \nabla u\right
\vert ^{2}dx-d_{2}\int_{\Omega }\frac{v^{\ast }}{v^{2}}\left \vert \nabla
v\right \vert ^{2}dx-\int_{\Omega }\left[ d_{1}\frac{u^{\ast }}{u^{2}}\left
\vert \nabla u\right \vert ^{2}+d_{2}\frac{v^{\ast }}{v^{2}}\left \vert
\nabla v\right \vert ^{2}\right] dx\leq 0,
\end{equation}%
and%
\begin{equation}
J=\int_{\Omega }\left( 1-\frac{u^{\ast }}{u}\right) \left[ \Lambda -\lambda
u\varphi \left( v\right) -\mu u\right] dx+\int_{\Omega }\left( 1-\frac{%
v^{\ast }}{v}\right) \left[ \lambda u\varphi \left( v\right) -\sigma v\right]
dx.
\end{equation}%
Using (\ref{e1.12}) and simplifying the resulting equation leads to
\begin{align}
J& =\int_{\Omega }\mu u^{\ast }\left( 1-\frac{u}{u^{\ast }}\right) \left( 1-%
\frac{u^{\ast }}{u}\right) dx+\lambda u^{\ast }\varphi \left( v^{\ast
}\right) \int_{\Omega }\left( \frac{u\varphi \left( v\right) }{u^{\ast
}\varphi \left( v^{\ast }\right) }-\frac{v}{v^{\ast }}\right) \left( 1-\frac{%
v^{\ast }}{v}\right) dx  \notag  \label{e1.22} \\
& \  \  \  \  \  \ +\lambda u^{\ast }\varphi \left( v^{\ast }\right) \int_{\Omega
}\left( 1-\frac{u\varphi \left( v\right) }{u^{\ast }\varphi \left( v^{\ast
}\right) }\right) \left( 1-\frac{u^{\ast }}{u}\right) dx  \notag \\
& =\int_{\Omega }\left[ \mu u^{\ast }J_{1}+\lambda u^{\ast }\varphi \left(
v^{\ast }\right) J_{2}\right] dx,
\end{align}%
where%
\begin{equation*}
J_{1}=\left( 1-\frac{u^{\ast }}{u}\right) \left( 1-\frac{u}{u^{\ast }}%
\right) ,
\end{equation*}%
and%
\begin{equation*}
J_{2}=\frac{\varphi \left( v\right) }{\varphi \left( v^{\ast }\right) }-%
\frac{v}{v^{\ast }}-\frac{u\varphi \left( v\right) v^{\ast }}{u^{\ast
}\varphi \left( v^{\ast }\right) v}-\frac{u^{\ast }}{u}+2.
\end{equation*}%
Since $J_{1}$ and $J_{2}$ can be rewritten in the forms%
\begin{equation}
J_{1}=-L\left( \frac{u}{u^{\ast }}\right) -L\left( \frac{u^{\ast }}{u}%
\right) ,  \label{e1.23}
\end{equation}%
and%
\begin{equation}
J_{2}=-L\left( \frac{u^{\ast }}{u}\right) +L\left( \frac{\varphi \left(
v\right) }{\varphi \left( v^{\ast }\right) }\right) -L\left( \frac{v}{%
v^{\ast }}\right) -L\left( \frac{u\varphi \left( v\right) v^{\ast }}{u^{\ast
}\varphi \left( v^{\ast }\right) v}\right) ,  \label{e1.24}
\end{equation}%
it follows that%
\begin{align*}
J& =-\mu u^{\ast }\int_{\Omega }\left[ L\left( \frac{u}{u^{\ast }}\right)
+L\left( \frac{u^{\ast }}{u}\right) \right] dx-\lambda u^{\ast }\varphi
\left( v^{\ast }\right) \int_{\Omega }\left[ L\left( \frac{u^{\ast }}{u}%
\right) +L\left( \frac{u\varphi \left( v\right) v^{\ast }}{u^{\ast }\varphi
\left( v^{\ast }\right) v}\right) \right] dx \\
& \  \  \  \ +\lambda u^{\ast }\varphi \left( v^{\ast }\right) \int_{\Omega }
\left[ L\left( \frac{\varphi \left( v\right) }{\varphi \left( v^{\ast
}\right) }\right) -L\left( \frac{v}{v^{\ast }}\right) \right] dx.
\end{align*}%
Taking advantage of the established positivity of $L$ and applying Lemma \ref%
{Lemma 2}, we end up with%
\begin{equation*}
J\leq 0.
\end{equation*}%
Hence, $\frac{d}{dt}\mathcal{V}\left( t\right) \leq 0$ and, consequently, $%
\mathcal{V}$ is non--increasing in time with $\mathcal{V}\left( t\right) =0$
only at the steady state $E^{\ast }$. The global asymptotic stability of $%
E^{\ast }$ follows from Lyapunov's direct method's.
\end{proof}

\subsection{Important Remarks}

It is important to note that, for system (\ref{e1.1})--(\ref{e1.2}) in ODE
case, we have the following results:

\begin{itemize}
\item[(i)] For $R_{0}\leq 1$, the disease--free equilibrium $E_{0}$ is
globally asymptotically stable. This easy to establish as $d_{1}=d_{2}=0$
and, thus, it suffices to choose $\theta =1$ in $\mathcal{V}_{\theta }\left(
t\right) $.

\item[(ii)] For $R_{0}>1$ and provided that condition (\ref{e1.5}) is
satisfied, the endemic equilibrium $E^{\ast }$ $=\left( u^{\ast },v^{\ast
}\right) $ is globally asymptotically stable.
\end{itemize}

\section{Applications and Numerical Examples\label{SecExampl}}

Although the analytical methods we used to obtain the main results of this
study are theoretically sound, it is always beneficial to use numerical
solutions to illustrate and confirm their validity. We use consider an
incidence function of he form $u\varphi \left( v\right) $ satisfying (\ref%
{e1.5}) and assess the local and global asymptotic stability of the
disease--free equilibrium $E_{0}$\ when $R_{0}\leq 1$ and the endemic
equilibrium $E^{\ast }$ when $R_{0}>1$. Focus will be on the main results of
the paper as reported in Theorems \ref{TheoG1} (condition (\ref{e1.15})) and %
\ref{TheoG2}. In the following, we examine three different numerical
examples.

\subsection{First Example}

In this first example, we consider the function%
\begin{equation*}
\varphi \left( v\right) =\alpha v,\text{ for all }\alpha >0.
\end{equation*}%
The resulting problem is given by%
\begin{equation}
\left \{
\begin{array}{ll}
\dfrac{\partial u}{\partial t}-d_{1}\Delta u=-\lambda \alpha uv+\Lambda -\mu
u & \text{in }(0,\infty )\times \Omega , \\
\dfrac{\partial v}{\partial t}-d_{2}\Delta v=\lambda \alpha uv-\sigma v &
\text{in }(0,\infty )\times \Omega , \\
u(0,x)=u_{0}(x){,\  \ }v(0,x)=v_{0}(x) & \text{on }\Omega , \\
\frac{\partial u}{\partial \nu }=\frac{\partial v}{\partial \nu }=0, & \text{%
on }(0,\infty )\times \partial \Omega ,%
\end{array}%
\right.   \label{e1.25}
\end{equation}%
which is a special case of system (\ref{e1.1})--(\ref{e1.3}). In fact,
system (\ref{e1.25}) is identical to the bird system proposed in Section 3
of \cite{kim2010} and in \cite{Chinviriyasit2010} but with $d_{1}=d_{2}$.
The ODE scenario of this model was treated earlier in \cite{Korobeinikov2002}
and \cite{Zhou2003}, respectively. Conditions (\ref{e1.4}) and (\ref{e1.5})
are clearly satisfied as%
\begin{equation*}
\left \{
\begin{array}{l}
\varphi \left( 0\right) =0, \\
\varphi ^{\prime }\left( v\right) =\alpha >0, \\
\varphi ^{\prime }\left( 0\right) =\alpha , \\
\alpha v=v\varphi ^{\prime }\left( v\right) \leq \varphi \left( v\right)
=\alpha v.%
\end{array}%
\right.
\end{equation*}%
System (\ref{e1.25}) possesses two constant steady states
\begin{equation*}
E_{0}=\left( \frac{\Lambda }{\mu },0\right) \  \  \text{and}\  \ E^{\ast
}=\left( \frac{\sigma }{\lambda \alpha },\mu \sigma \left( R_{0}-1\right)
\right) .
\end{equation*}%
Note that the second steady state $E^{\ast }$ exists only when the
reproduction number $R_{0}=\frac{\lambda \Lambda }{\mu \sigma }\varphi
^{\prime }\left( 0\right) =\frac{\lambda \Lambda }{\mu \sigma }\alpha >1$
and is globally asymptotically stable. Also, note that the first steady
state $E_{0}$ is globally asymptotically stable unconditionally in the ODE
case and subject to
\begin{equation*}
\frac{(d_{1}+d_{2})^{2}}{4d_{1}d_{2}}\leqslant \theta \leqslant \frac{\mu }{%
\Lambda }\left( \frac{\mu +\sigma }{\lambda \alpha }-\frac{\Lambda }{\sigma }%
\right)
\end{equation*}%
when $d_{1}\neq d_{2}$, and to $\theta =1$ when $d_{1}=d_{2}$ in the PDE
case. As detailed in Table \ref{Tab1}, we use different sets of parameters
to obtain numerical solutions in the ODE and PDE. Note that throughout the
PDE simulations, we assume a single spatial dimension with $\Omega =(0,10)$.

\begin{table}[tbp]
\caption{Simulation parameters for example 1: system (\protect \ref{e1.25}).}
\label{Tab1}\centering%
\begin{tabular}{|l|l|l|l|l|l|l|l|l|l|l|l|l|}
\hline
& Set & Figure & $u_{0}$ & $v_{0}$ & $d_{1}$ & $d_{2}$ & $\lambda $ & $%
\alpha $ & $\sigma $ & $\Lambda $ & $\mu $ & $R_{0}$ \\ \hline
ODE & Set 1 & \ref{Fig1_ode}(a) & $6$ & $\frac{3}{2}$ & $-$ & $-$ & $\frac{1%
}{3}$ & $3$ & $2$ & $8$ & $1$ & $4$ \\ \cline{2-13}
case & Set 2 & \ref{Fig1_ode}(b) & $6$ & $\frac{3}{2}$ & $-$ & $-$ & $2$ & $%
\frac{1}{3}$ & $\frac{3}{2}$ & $6$ & $4$ & $0.8333$ \\ \hline
PDE & Set 1 & \ref{Fig1} & $4+\frac{\cos (x)}{10}$ & $5+\frac{\sin (x)}{10}$
& $3$ & $\frac{5}{4}$ & $\frac{1}{3}$ & $3$ & $2$ & $8$ & $1$ & $4$ \\
\cline{2-13}
case & Set 2 & \ref{Fig2} & $4+\frac{\cos (x)}{10}$ & $5+\frac{\sin (x)}{10}$
& $3$ & $\frac{5}{4}$ & $2$ & $\frac{1}{3}$ & $\frac{3}{2}$ & $6$ & $4$ & $%
0.8333$ \\ \hline
\end{tabular}%
\end{table}

The following is a description of the results:

\begin{itemize}
\item Figure \ref{Fig1_ode} shows the solutions in the ODE case subject to
sets 1 and 2, with $R_{0}=4$ and $R_{0}=0.8333$, respectively. In the first
case, as $R_{0}>1$, $E^{\ast }=(2,3)$ is globally asymptotically stable. In
the second case, $R_{0}\leq 1$ and $E_{0}=(\frac{3}{2},0)$ is globally
asymptotically stable.

\item Figure \ref{Fig1} depicts the solution in the PDE case subject to
parameter set 1, where $R_{0}=4>1$, which by Theorem \ref{TheoG2} means that
$E^{\ast }=(2,3)$ is globally asymptotically stable.

\item Figure \ref{Fig2} depicts the solution in the PDE case subject to
parameter set 2, where $R_{0}=0.8333\leq 1$. By Theorem \ref{TheoG1} and
given $\theta \in \left[ \frac{289}{240},\frac{17}{6}\right] $, $E_{0}=(%
\frac{3}{2},0)$ is globally asymptotically stable.
\end{itemize}

\begin{figure}[tbp]
\centering \includegraphics[width=5in]{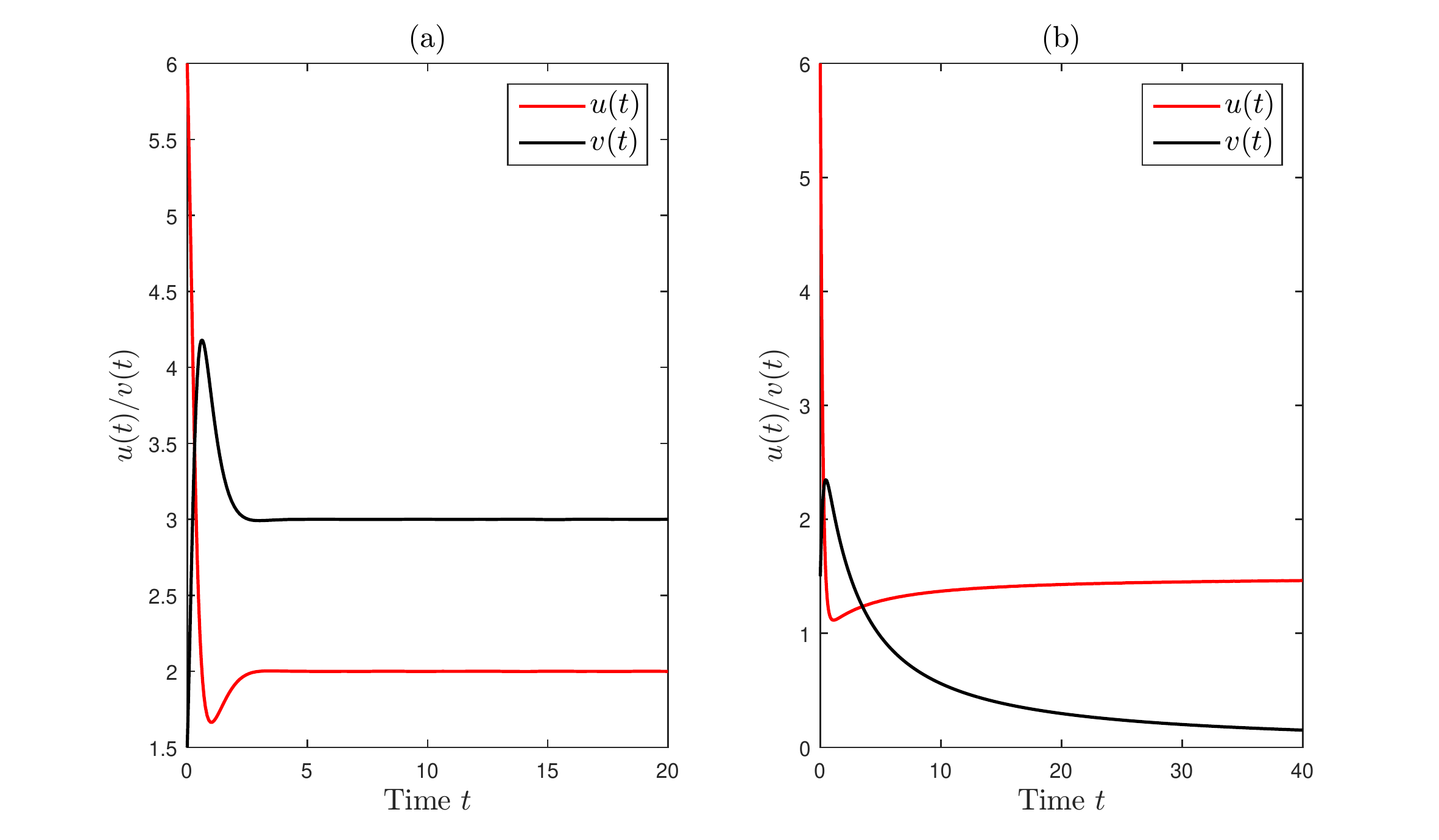}
\caption{Numerical solutions of system (\protect \ref{e1.25}) (ODE case)
subject to the first and second sets of parameters.}
\label{Fig1_ode}
\end{figure}

\begin{figure}[tbph]
\centering \includegraphics[width=5in]{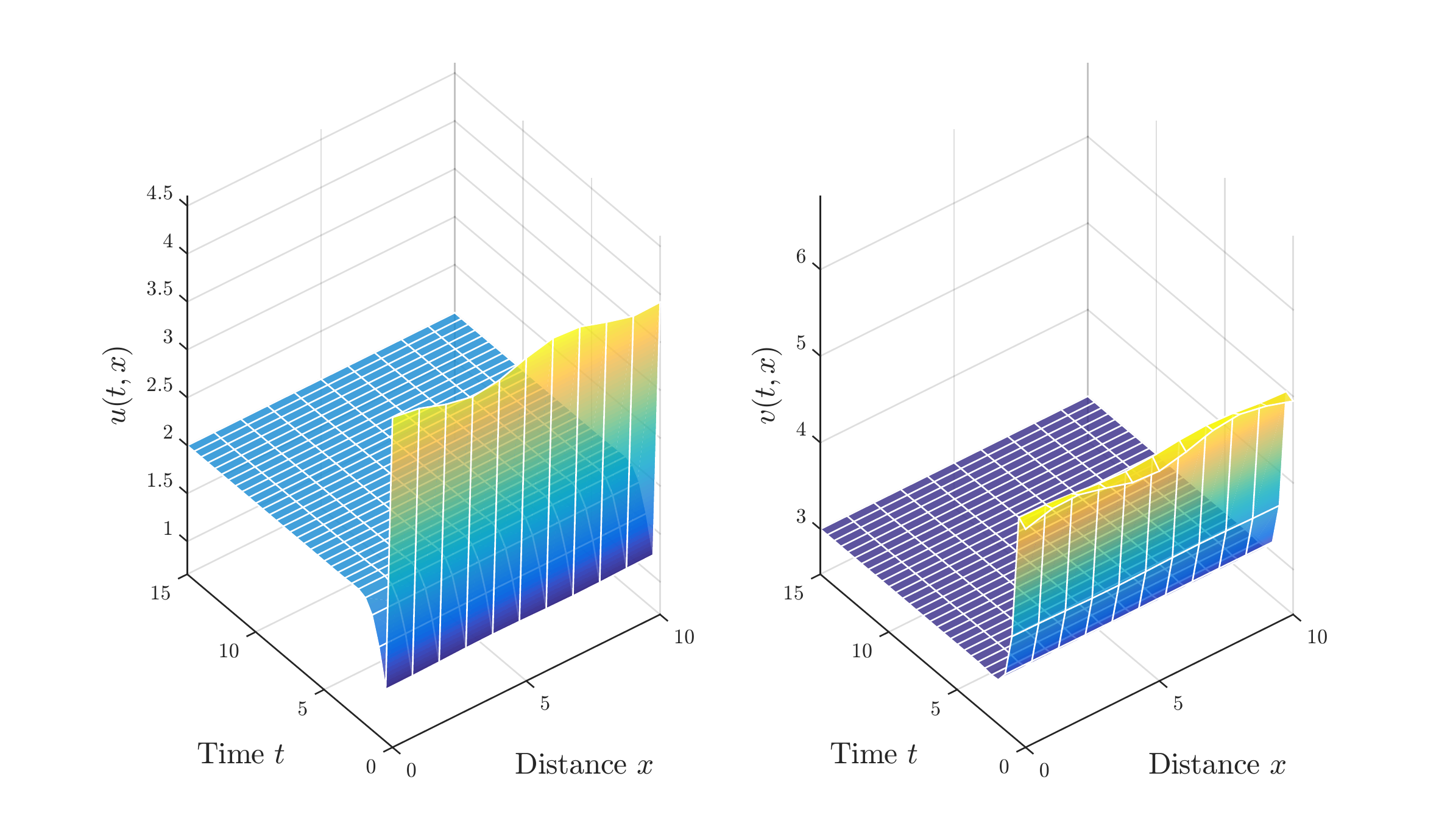}
\caption{Numerical solutions of system (\protect \ref{e1.25}) subject to the
first set of parameters.}
\label{Fig1}
\end{figure}

\begin{figure}[tbph]
\centering \includegraphics[width=5in]{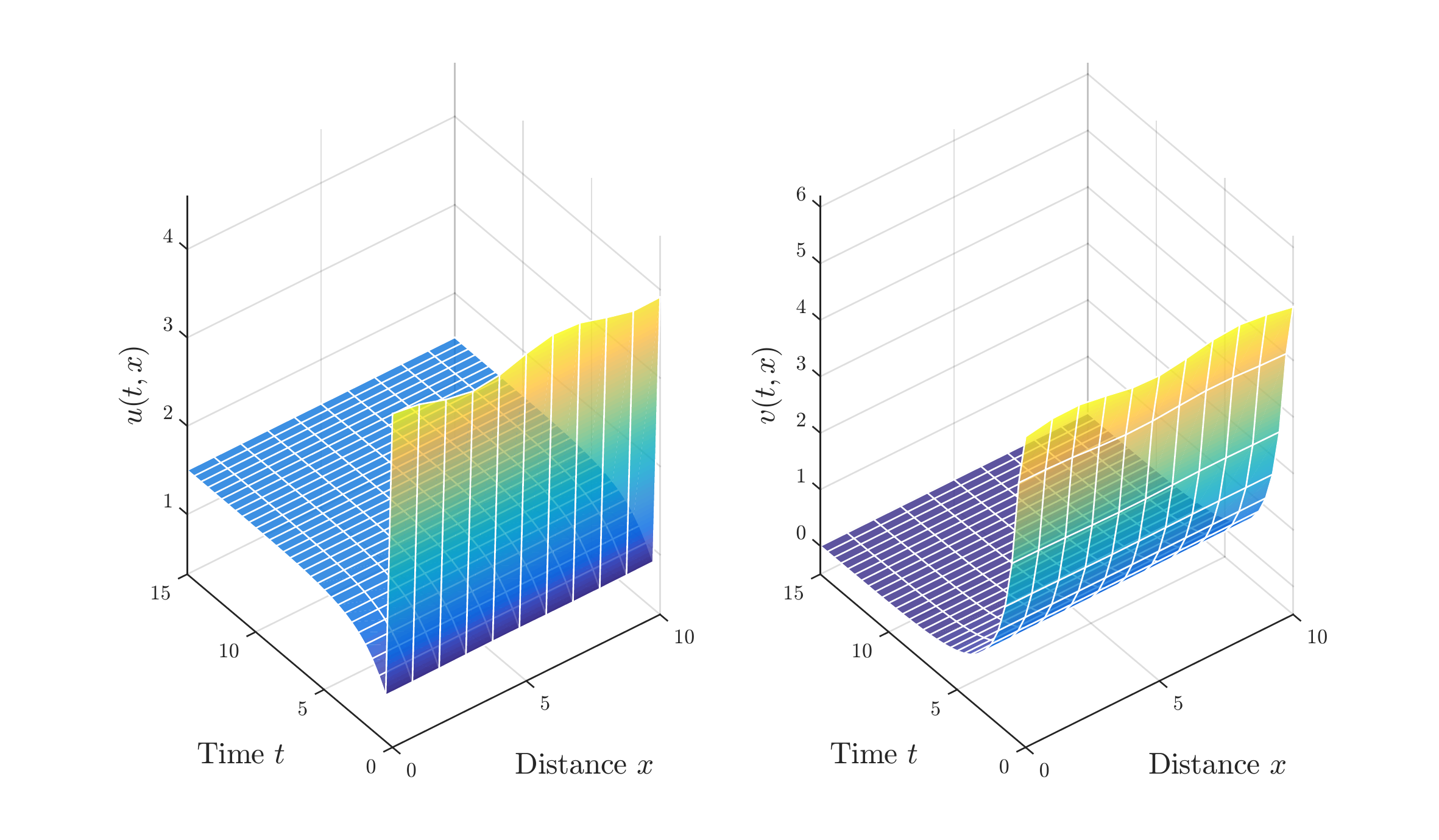}
\caption{Numerical solutions of system (\protect \ref{e1.25}) subject to the
second set of parameters.}
\label{Fig2}
\end{figure}

\subsection{Second Example}

The second numerical example which we are interested in is the PDE extension
of the ODE SIR\ model studied in \cite{Lahrouz2011}, which is a special case
of (\ref{e1.1}) with $\varphi \left( v\right) =\frac{\alpha v}{1+kv},\alpha
>0$ and $k\geq 0$. The resulting system is described by%
\begin{equation}
\left \{
\begin{array}{ll}
\dfrac{\partial u}{\partial t}-d_{1}\Delta u=-\lambda \frac{\alpha uv}{1+kv}%
+\Lambda -\mu u & \text{in }(0,\infty )\times \Omega , \\
\dfrac{\partial v}{\partial t}-d_{2}\Delta v=\lambda \frac{\alpha uv}{1+kv}%
-\sigma v & \text{in }(0,\infty )\times \Omega , \\
u(0,x)=u_{0}(x){,\  \ }v(0,x)=v_{0}(x) & \text{on }\Omega , \\
\frac{\partial u}{\partial \nu }=\frac{\partial v}{\partial \nu }=0, & \text{%
on }(0,\infty )\times \partial \Omega .%
\end{array}%
\right.   \label{e1.26}
\end{equation}%
The imposed conditions (\ref{e1.4}) and (\ref{e1.5}) can be easily verified.
It is evident that%
\begin{equation*}
\varphi \left( 0\right) =0\text{ and }\varphi \left( v\right) >0\text{ for
all }v>0.
\end{equation*}%
Also, the derivative of $\varphi \left( v\right) $ is given by
\begin{eqnarray*}
\varphi ^{\prime }\left( v\right)  &=&\left( \frac{\alpha v}{1+kv}\right)
^{\prime } \\
&=&\frac{\alpha }{\left( 1+kv\right) ^{2}}>0\text{ and }\varphi ^{\prime
}\left( 0\right) =\alpha .
\end{eqnarray*}%
In addition, we have%
\begin{equation*}
v\varphi ^{\prime }\left( v\right) =v\frac{\alpha }{\left( 1+kv\right) ^{2}}%
\leq \frac{\alpha v}{1+kv}=\varphi \left( v\right) .
\end{equation*}%
The constant steady states of system (\ref{e1.26}) are
\begin{equation*}
E_{0}=\left( \frac{\Lambda }{\mu },0\right) \  \  \text{and\  \ }E^{\ast
}=\left( \frac{\sigma \left( 1+kv^{\ast }\right) }{\lambda \alpha },\mu
\frac{\left( R_{0}-1\right) }{\left( \lambda \alpha +k\mu \right) }\right) .
\end{equation*}%
Again, $E^{\ast }$ exists and is globally asymptotically stable provided
that the reproduction number $R_{0}=\frac{\lambda \Lambda }{\mu \sigma }%
\alpha >1$. On the other hand, $E_{0}$ is globally asymptotically stable in
the ODE case with no conditions and in the PDE case if $\frac{%
(d_{1}+d_{2})^{2}}{4d_{1}d_{2}}\leqslant \theta \leqslant \frac{\mu }{%
\Lambda }\left( \frac{\mu +\sigma }{\lambda \alpha }-\frac{\Lambda }{\sigma }%
\right) $ when $d_{1}\neq d_{2}$, and if $\theta =1$ when $d_{1}=d_{2}$.
Table \ref{Tab2} shows the parameter sets used in the numerical simulations.

\begin{table}[tbp]
\caption{Simulation parameters for example 2: system (\protect \ref{e1.26}).}
\label{Tab2}\centering%
\begin{tabular}{|l|l|l|l|l|l|l|l|l|l|l|l|l|l|}
\hline
& Set & Figure & $u_{0}$ & $v_{0}$ & $d_{1}$ & $d_{2}$ & $\lambda $ & $%
\alpha $ & $\sigma $ & $\Lambda $ & $\mu $ & $k$ & $R_{0}$ \\ \hline
& Set 1 & \ref{Fig2_ode}(a) & $0.2$ & $4.3$ & $-$ & $-$ & $\frac{7}{12}$ & $%
\frac{13}{4}$ & $\frac{9}{4}$ & $\frac{33}{4}$ & $\frac{5}{4}$ & $\frac{1}{2}
$ & $5.5611$ \\ \cline{2-14}
ODE & Set 2 & \ref{Fig2_ode}(b) & $0.2$ & $4.3$ & $-$ & $-$ & $2$ & $\frac{1%
}{3}$ & $1$ & $5$ & $4$ & $7$ & $0.8333$ \\ \cline{2-14}
case & Set 3 & \ref{Fig3_ode}(a) & $8$ & $10$ & $-$ & $-$ & $\frac{7}{12}$ &
$\frac{13}{4}$ & $\frac{9}{4}$ & $\frac{33}{4}$ & $\frac{5}{4}$ & $\frac{1}{2%
}$ & $5.5611$ \\ \cline{2-14}
& Set 4 & \ref{Fig3_ode}(b) & $8$ & $10$ & $-$ & $-$ & $2$ & $\frac{1}{3}$ &
$1$ & $5$ & $4$ & $7$ & $0.8333$ \\ \hline
& Set 1 & \ref{Fig3} & $0.2+\frac{\cos (x)}{10}$ & $0.6+\frac{\sin (x)}{10}$
& $3$ & $2$ & $\frac{7}{12}$ & $\frac{13}{4}$ & $\frac{9}{4}$ & $\frac{33}{4}
$ & $\frac{5}{4}$ & $\frac{1}{2}$ & $5.5611$ \\ \cline{2-14}
PDE & Set 2 & \ref{Fig5} & $4+\frac{\cos (x)}{10}$ & $5+\frac{\sin (x)}{10}$
& $3$ & $2$ & $\frac{7}{12}$ & $\frac{13}{4}$ & $\frac{9}{4}$ & $\frac{33}{4}
$ & $\frac{5}{4}$ & $3$ & $5.5611$ \\ \cline{2-14}
case & Set 3 & \ref{Fig6} & $0.2+\frac{\cos (x)}{10}$ & $0.6+\frac{\sin (x)}{%
10}$ & $3$ & $2$ & $2$ & $\frac{1}{3}$ & $1$ & $5$ & $4$ & $\frac{2}{3}$ & $%
0.8333$ \\ \cline{2-14}
& Set 4 & \ref{Fig7} & $0.2+\frac{\cos (x)}{10}$ & $0.6+\frac{\sin (x)}{10}$
& $\frac{7}{2}$ & $\frac{5}{4}$ & $2$ & $\frac{1}{3}$ & $1$ & $5$ & $4$ & $7$
& $0.8333$ \\ \hline
\end{tabular}%
\end{table}

The following is a description of the results:

\begin{itemize}
\item Figures \ref{Fig2_ode} and \ref{Fig3_ode} depict the numerical
solutions obtained using the four parameter sets in the ODE case with steady
states $E^{\ast }=(2.5289,2.2617)$, $E_{0}=(1.25,0)$, $E^{\ast
}=(2.5289,2.2617)$, and $E_{0}=(1.25,0)$, respectively. In all four
scenarios, both the analytical and numerical methods agree that the
equilibria are stable.

\item Figure \ref{Fig3} shows the PDE solution obtained using parameter set
1 with $E^{\ast }=(2.5289,2.2617)$. In this case, $R_{0}=5.5611>1$ and by
Theorem \ref{TheoG2}, $E^{\ast }$ is globally asymptotically stable.

\item Figure \ref{Fig5} shows the PDE solution obtained using parameter set
2 with $E^{\ast }=(4.7823,1.0098)$. Since $R_{0}=5.5611>1$, $E^{\ast }$ is
globally asymptotically stable.

\item Figure \ref{Fig6} shows the PDE solution obtained using parameter set
3 with $E_{0}=(1.25,0)$. In this case, $R_{0}=0.8333\leq 1$ and using
Theorem \ref{TheoG1} with $\theta \in \left[ \frac{25}{24},2\right] $, $%
E_{0} $ is globally asymptotically stable.

\item Figure \ref{Fig7} shows the PDE solution obtained using parameter set
4 with $E_{0}=(1.25,0)$. In this scenario, we again have $R_{0}=0.8333\leq 1$
and with $\theta \in \left[ \frac{361}{280},2\right] $, $E_{0}$ is globally
asymptotically stable.
\end{itemize}

\begin{figure}[tbp]
\centering
\includegraphics[width=5in]{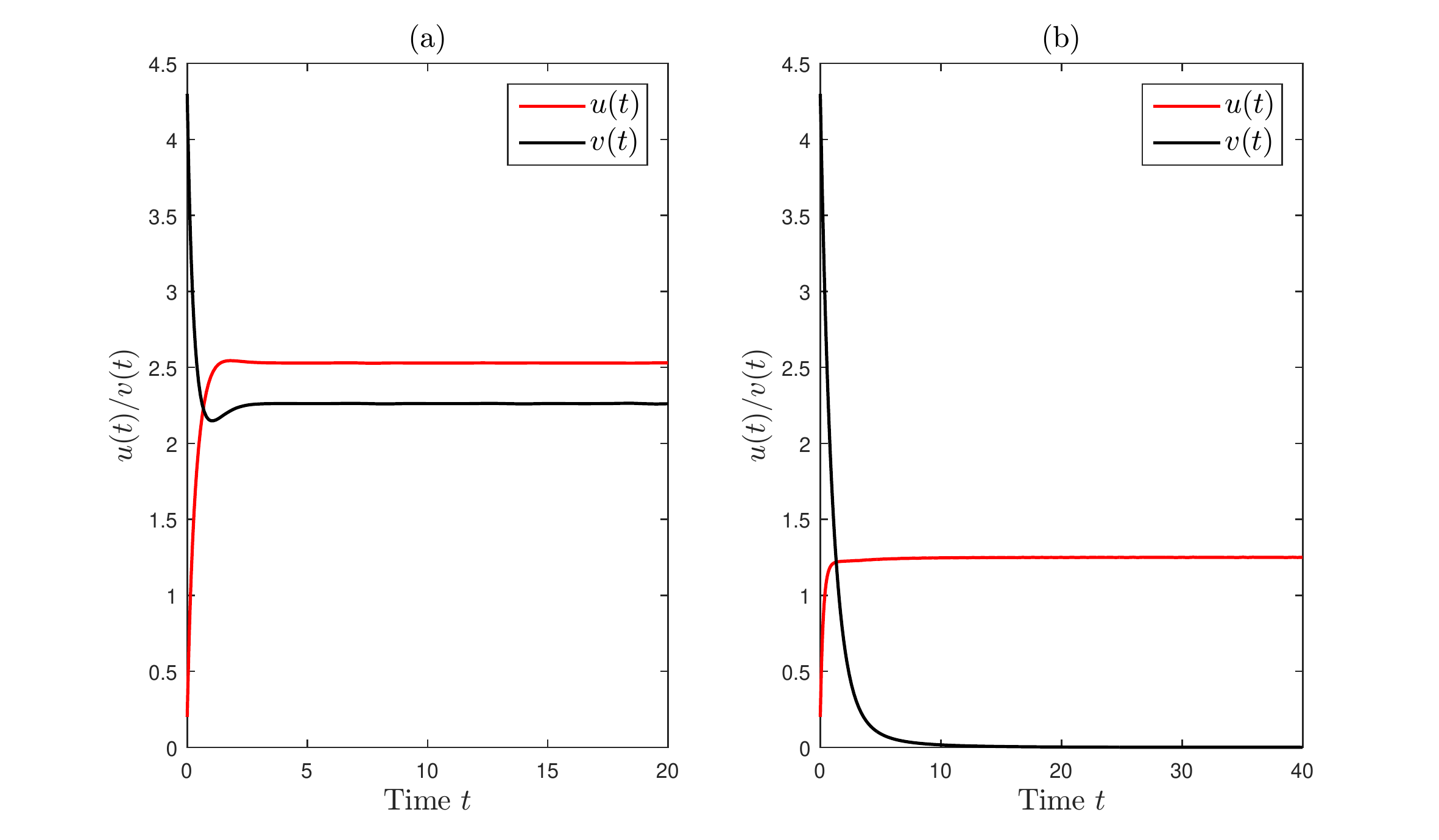}
\caption{Numerical solutions of system (\protect \ref{e1.26}) (ODE case)
subject to the first and second sets of parameters.}
\label{Fig2_ode}
\end{figure}

\begin{figure}[tbp]
\centering
\includegraphics[width=5in]{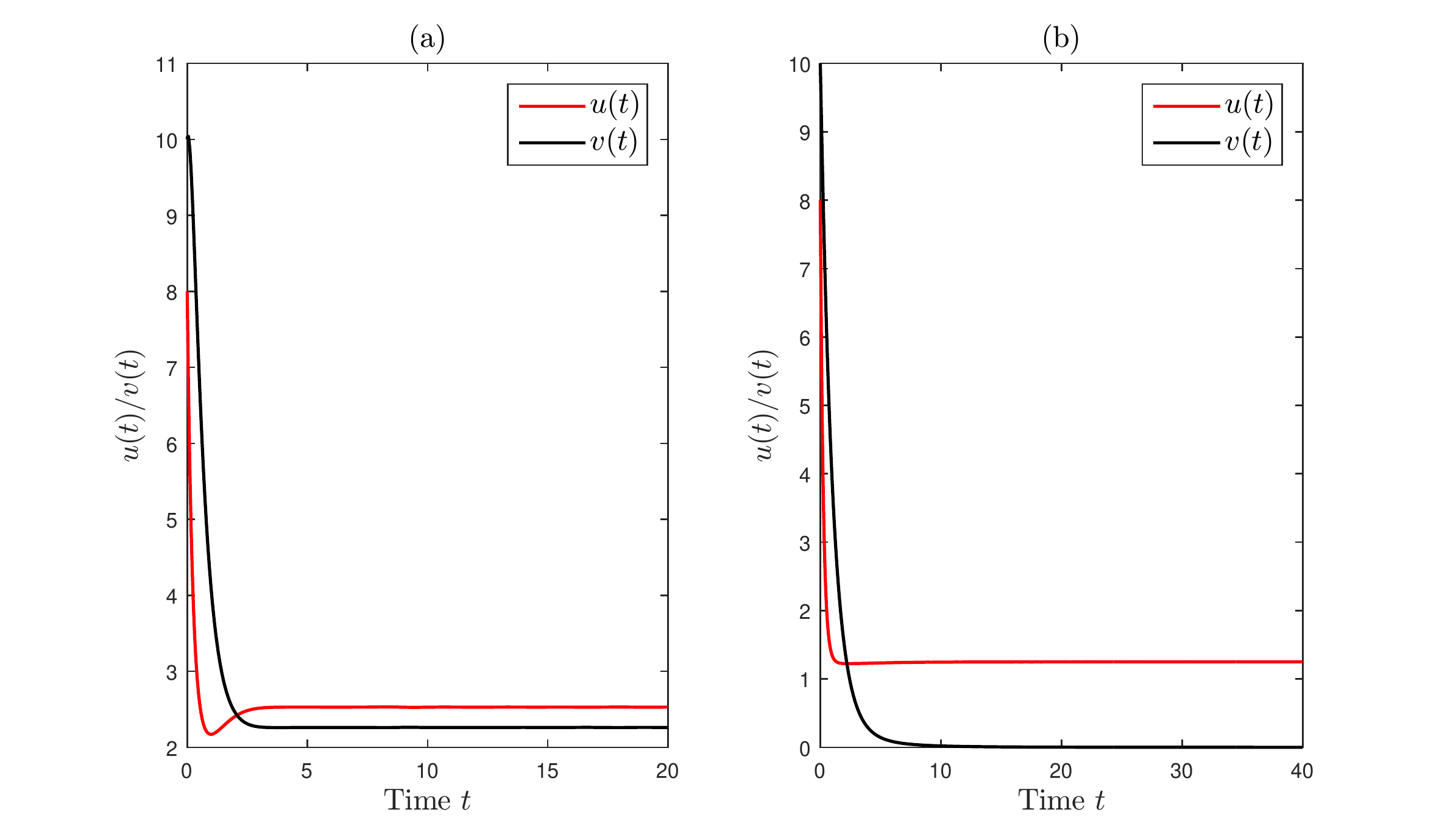}
\caption{Numerical solutions of system (\protect \ref{e1.26}) (ODE case)
subject to the third and fourth sets of parameters.}
\label{Fig3_ode}
\end{figure}

\begin{figure}[tbph]
\centering
\includegraphics[width=5in]{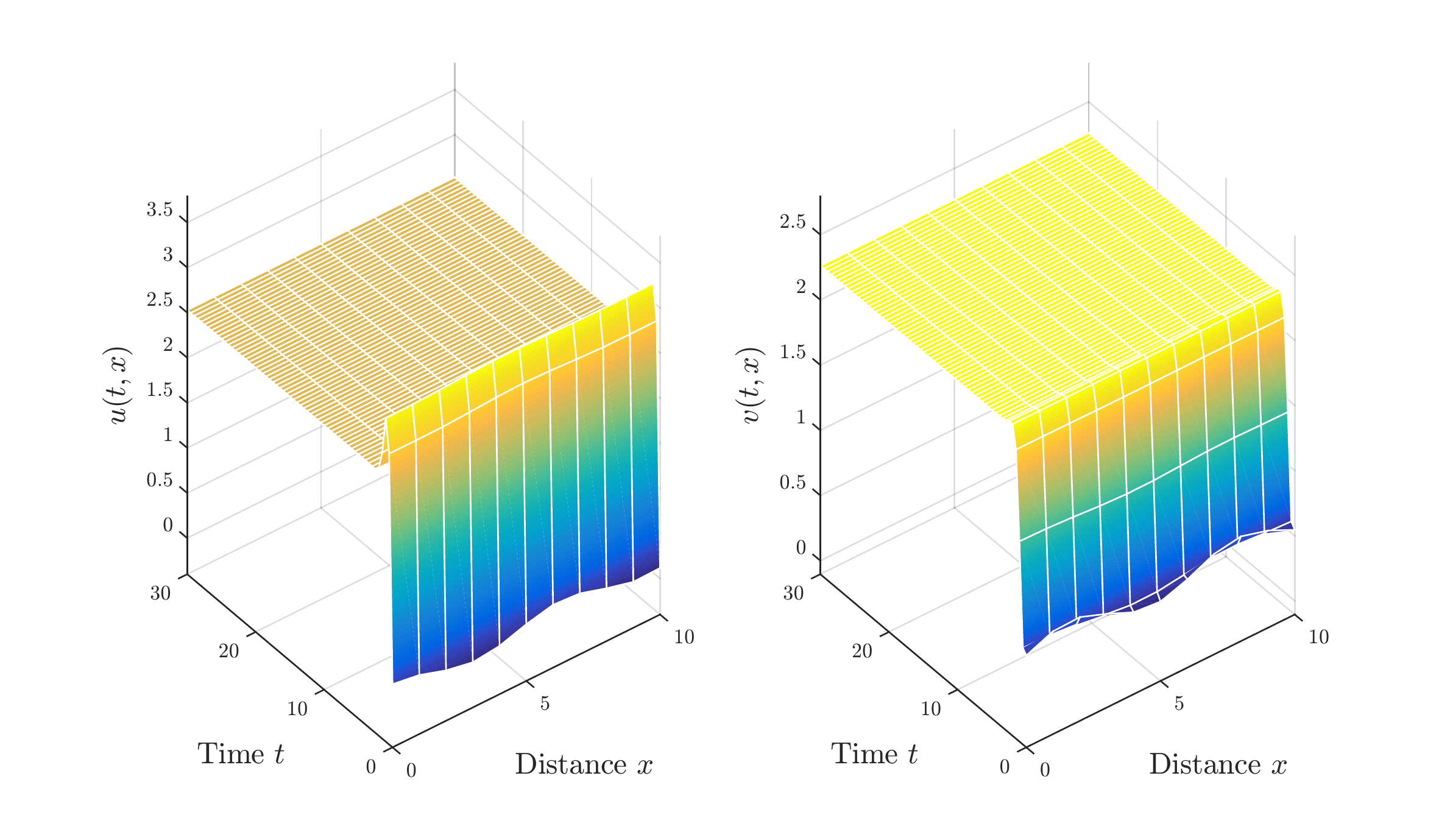}
\caption{Numerical solutions of system (\protect \ref{e1.26}) subject to the
first set of parameters.}
\label{Fig3}
\end{figure}

\begin{figure}[tbph]
\centering
\includegraphics[width=5in]{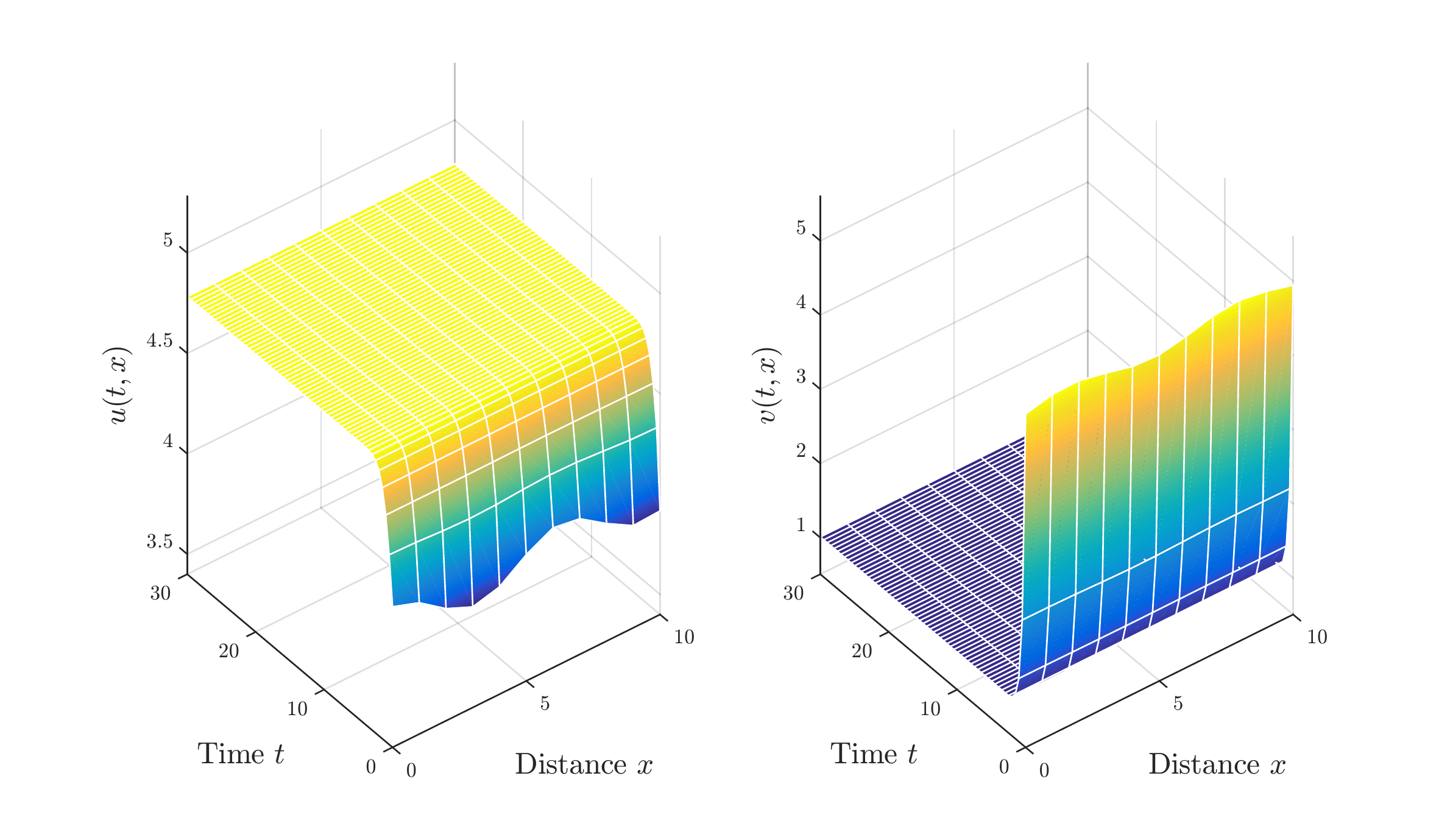}
\caption{Numerical solutions of system (\protect \ref{e1.26}) subject to the
second set of parameters.}
\label{Fig5}
\end{figure}

\begin{figure}[tbph]
\centering
\includegraphics[width=5in]{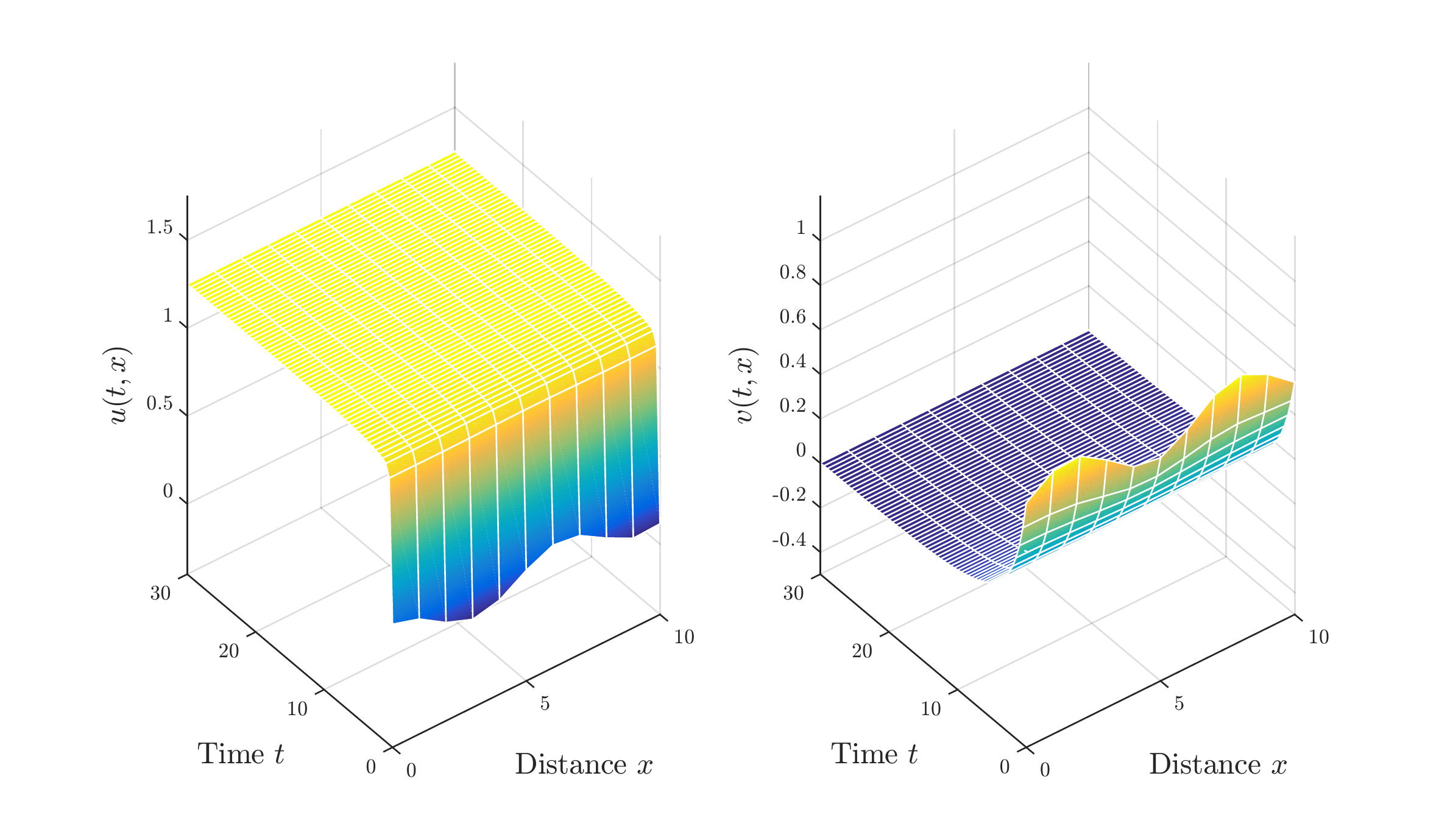}
\caption{Numerical solutions of system (\protect \ref{e1.26}) subject to the
third set of parameters.}
\label{Fig6}
\end{figure}

\begin{figure}[tbph]
\centering
\includegraphics[width=5in]{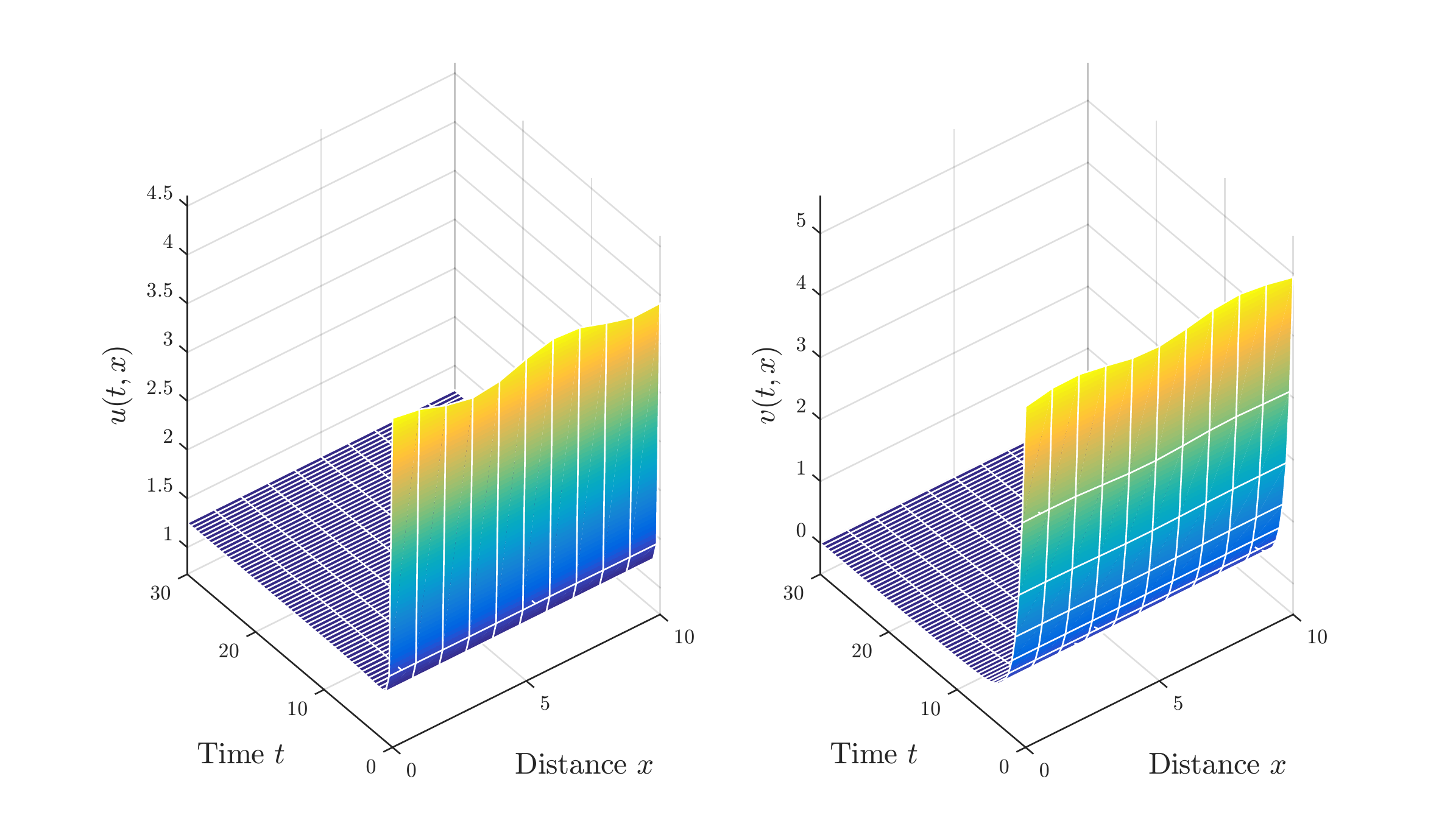}
\caption{Numerical solutions of system (\protect \ref{e1.26}) subject to the
fourth set of parameters.}
\label{Fig7}
\end{figure}

\subsection{Third Example}

The last example is obtained by substituting $\varphi \left( v\right) =\frac{%
kv}{1+\left( \frac{v}{\alpha }\right) }$, which yields the same system
studied in \cite{Capasso1978,Li2009} but with $d_{1}=d_{2}=0$. The resulting
system is given by%
\begin{equation}
\left \{
\begin{array}{ll}
\dfrac{\partial u}{\partial t}-d_{1}\Delta u=-\lambda k\frac{v}{1+\left(
\frac{v}{\alpha }\right) }u+\Lambda -\mu u & \text{in }(0,\infty )\times
\Omega , \\
\dfrac{\partial v}{\partial t}-d_{2}\Delta v=\lambda k\frac{v}{1+\left(
\frac{v}{\alpha }\right) }u-\sigma v & \text{in }(0,\infty )\times \Omega ,
\\
u(0,x)=u_{0}(x){,\  \ }v(0,x)=v_{0}(x) & \text{on }\Omega , \\
\frac{\partial u}{\partial \nu }=\frac{\partial v}{\partial \nu }=0, & \text{%
on }(0,\infty )\times \partial \Omega ,%
\end{array}%
\right.   \label{e1.27}
\end{equation}%
for $\alpha >0$ and $k>0$. The imposed conditions may be verified as%
\begin{equation*}
\left \{
\begin{array}{l}
\varphi \left( 0\right) =0, \\
\varphi ^{\prime }\left( v\right) >0\  \text{for all }v>0, \\
\varphi ^{\prime }\left( 0\right) =\frac{k}{\left( 1+\left( \frac{v}{\alpha }%
\right) \right) ^{2}}>0, \\
v\varphi ^{\prime }\left( v\right) =v\frac{k}{\left( 1+\left( \frac{v}{%
\alpha }\right) \right) ^{2}}\leq \frac{kv}{1+\left( \frac{v}{\alpha }%
\right) }=\varphi \left( v\right) .%
\end{array}%
\right.
\end{equation*}%
The steady states of system (\ref{e1.27}) are given by $E_{0}=\left( \frac{%
\Lambda }{\mu },0\right) $ and $E^{\ast }=\left( \frac{\sigma \left( \alpha
+v^{\ast }\right) }{\lambda \alpha k},\mu \alpha \frac{\left( R_{0}-1\right)
}{\left( \lambda \alpha k+\mu \right) }\right) $ with\ the reproduction
number $R_{0}=\frac{\lambda \Lambda }{\mu \sigma }k>1$. Note that if $%
E^{\ast }$ exists than it is globally asymptotically stable and that $E_{0}$%
\ is globally asymptotically stable if $\frac{(d_{1}+d_{2})^{2}}{4d_{1}d_{2}}%
\leqslant \theta \leqslant \frac{\mu }{\Lambda }\left( \frac{\mu +\sigma }{%
\lambda k}-\frac{\Lambda }{\sigma }\right) $ when $d_{1}\neq d_{2}$, and if $%
\theta =1$ when $d_{1}=d_{2}$ or in the ODE case. Table \ref{Tab3} details
the sets of parameters used in the numerical simulations.

\begin{table}[tbp]
\caption{Simulation parameters for example 2: system (\protect \ref{e1.26}).}
\label{Tab3}\centering%
\begin{tabular}{|l|l|l|l|l|l|l|l|l|l|l|l|l|l|}
\hline
& Set & Figure & $u_{0}$ & $v_{0}$ & $d_{1}$ & $d_{2}$ & $\lambda $ & $%
\alpha $ & $\sigma $ & $\Lambda $ & $\mu $ & $k$ & $R_{0}$ \\ \hline
& Set 1 & \ref{Fig4_ode}(a) & $0.8$ & $1.2$ & $-$ & $-$ & $1$ & $2$ & $3$ & $%
6$ & $\frac{1}{3}$ & $2$ & $12$ \\ \cline{2-14}
ODE & Set 2 & \ref{Fig4_ode}(b) & $0.4$ & $6$ & $-$ & $-$ & $\frac{1}{2}$ & $%
1$ & $2$ & $\frac{3}{4}$ & $\frac{3}{7}$ & $\frac{4}{3}$ & $0.5833$ \\
\cline{2-14}
case & Set 3 & \ref{Fig5_ode}(a) & $0.2$ & $4$ & $-$ & $-$ & $1$ & $2$ & $3$
& $8$ & $\frac{2}{3}$ & $2$ & $8$ \\ \cline{2-14}
& Set 4 & \ref{Fig5_ode}(b) & $0.2$ & $3$ & $-$ & $-$ & $\frac{1}{2}$ & $1$
& $2$ & $\frac{3}{5}$ & $\frac{3}{7}$ & $\frac{6}{5}$ & $0.42$ \\ \hline
& Set 1 & \ref{Fig8} & $4+\frac{\cos (x)}{10}$ & $5+\frac{\sin (x)}{10}$ & $%
3 $ & $\frac{5}{4}$ & $1$ & $2$ & $3$ & $6$ & $\frac{1}{3}$ & $2$ & $12$ \\
\cline{2-14}
PDE & Set 2 & \ref{Fig9} & $0.6+\frac{\cos (x)}{7}$ & $0.4+\frac{\sin (x)}{8}
$ & $5$ & $2$ & $1$ & $2$ & $3$ & $6$ & $\frac{1}{3}$ & $2$ & $12$ \\
\cline{2-14}
case & Set 3 & \ref{Fig10} & $2.6+\frac{\cos (x)}{7}$ & $2.4+\frac{\sin (x)}{%
8}$ & $2$ & $1$ & $1$ & $2$ & $3$ & $8$ & $\frac{2}{3}$ & $2$ & $8$ \\
\cline{2-14}
& Set 4 & \ref{Fig11} & $4+\frac{\cos (x)}{10}$ & $5+\frac{\sin (x)}{10}$ & $%
3$ & $\frac{5}{4}$ & $\frac{1}{2}$ & $1$ & $2$ & $\frac{3}{4}$ & $\frac{3}{7}
$ & $\frac{4}{3}$ & $0.5833$ \\ \cline{2-14}
& Set 5 & \ref{Fig12} & $0.6+\frac{\cos (x)}{7}$ & $0.4+\frac{\sin (x)}{8}$
& $\frac{13}{4}$ & $2$ & $\frac{1}{2}$ & $1$ & $2$ & $\frac{3}{5}$ & $\frac{3%
}{7}$ & $\frac{6}{5}$ & $0.42$ \\ \hline
\end{tabular}%
\end{table}

The following is a description of the results:

\begin{itemize}
\item Figures \ref{Fig4_ode} and \ref{Fig5_ode} show the numerical solutions
of system (\ref{e1.27}) resulting the four parameter sets in the ODE case
with steady states $E^{\ast }=(2.7692,1.6923)$, $E_{0}=(1.75,0)$, $E^{\ast
}=(3,2)$, and $E_{0}=(1.4,0)$, respectively. In all four scenarios, both the
analytical and numerical methods agree that the equilibria are stable.

\item Figure \ref{Fig8} shows the PDE solution obtained using parameter set
1 with $E^{\ast }=(2.7692,1.6923)$. In this case, $R_{0}=12>1$ and by
Theorem \ref{TheoG2}, $E^{\ast }$ is globally asymptotically stable.

\item Figure \ref{Fig9} shows the PDE solution obtained using parameter set
2 with $E^{\ast }=(2.7692,1.6923)$. In this case, $R_{0}=12>1$ and $E^{\ast
} $ is globally asymptotically stable.

\item Figure \ref{Fig10} shows the PDE solution obtained using parameter set
3 with $E^{\ast }=(3,2)$. In this case, $R_{0}=8>1$ and $E^{\ast }$ is
globally asymptotically stable.

\item Figure \ref{Fig11} shows the PDE solution obtained using parameter set
4 with $E_{0}=(1.75,0)$. In this case, $R_{0}=0.5833\leq 1$ and by Theorem %
\ref{TheoG1} with $\theta \in \left[ \frac{289}{240},\frac{394}{211}\right] $%
, $E_{0}$ is globally asymptotically stable.

\item Figure \ref{Fig12} shows the PDE solution obtained using parameter set
5 with $E_{0}=(1.4,0)$. In this case, $R_{0}=0.42\leq 1$ and by Theorem \ref%
{TheoG1} with $\theta \in \left[ \frac{441}{416},\frac{1831}{684}\right] $, $%
E_{0}$ is globally asymptotically stable.
\end{itemize}

\begin{figure}[tbp]
\centering
\includegraphics[width=5in]{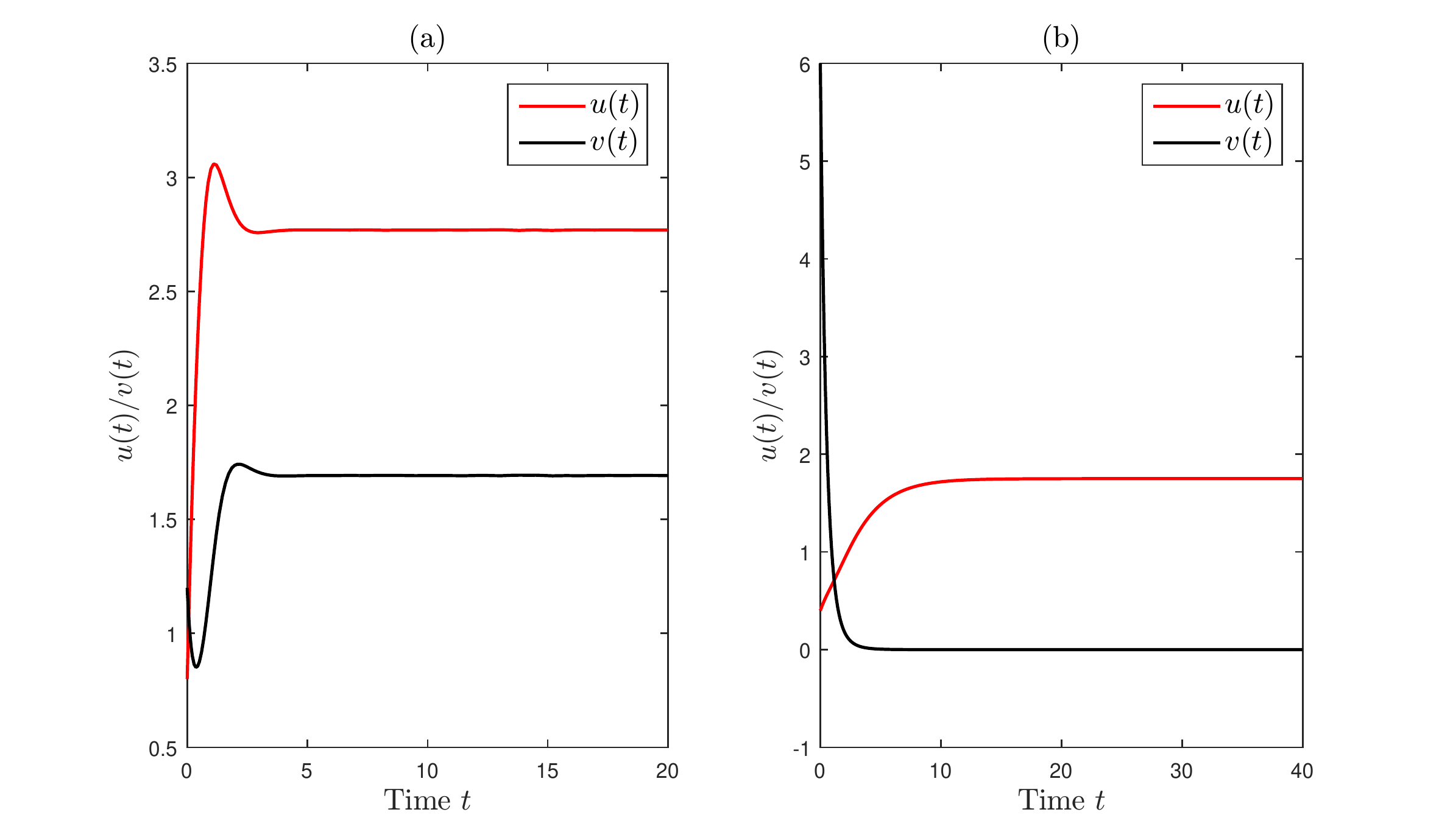}
\caption{Numerical solutions of system (\protect \ref{e1.27}) (ODE case)
subject to the first and second sets of parameters.}
\label{Fig4_ode}
\end{figure}

\begin{figure}[tbp]
\centering
\includegraphics[width=5in]{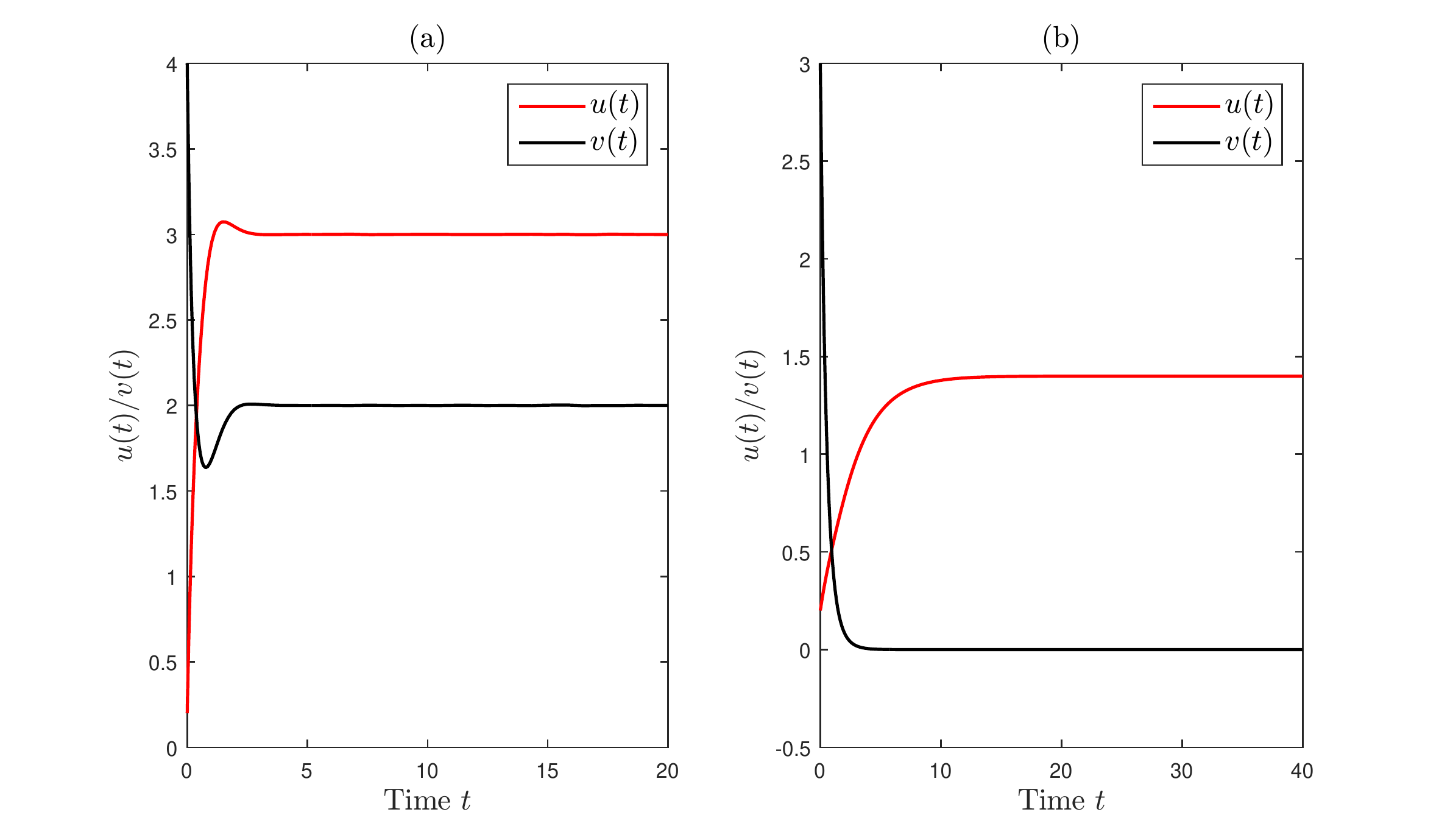}
\caption{Numerical solutions of system (\protect \ref{e1.27}) (ODE case)
subject to the third and fourth sets of parameters.}
\label{Fig5_ode}
\end{figure}

\begin{figure}[tbph]
\centering
\includegraphics[width=5in]{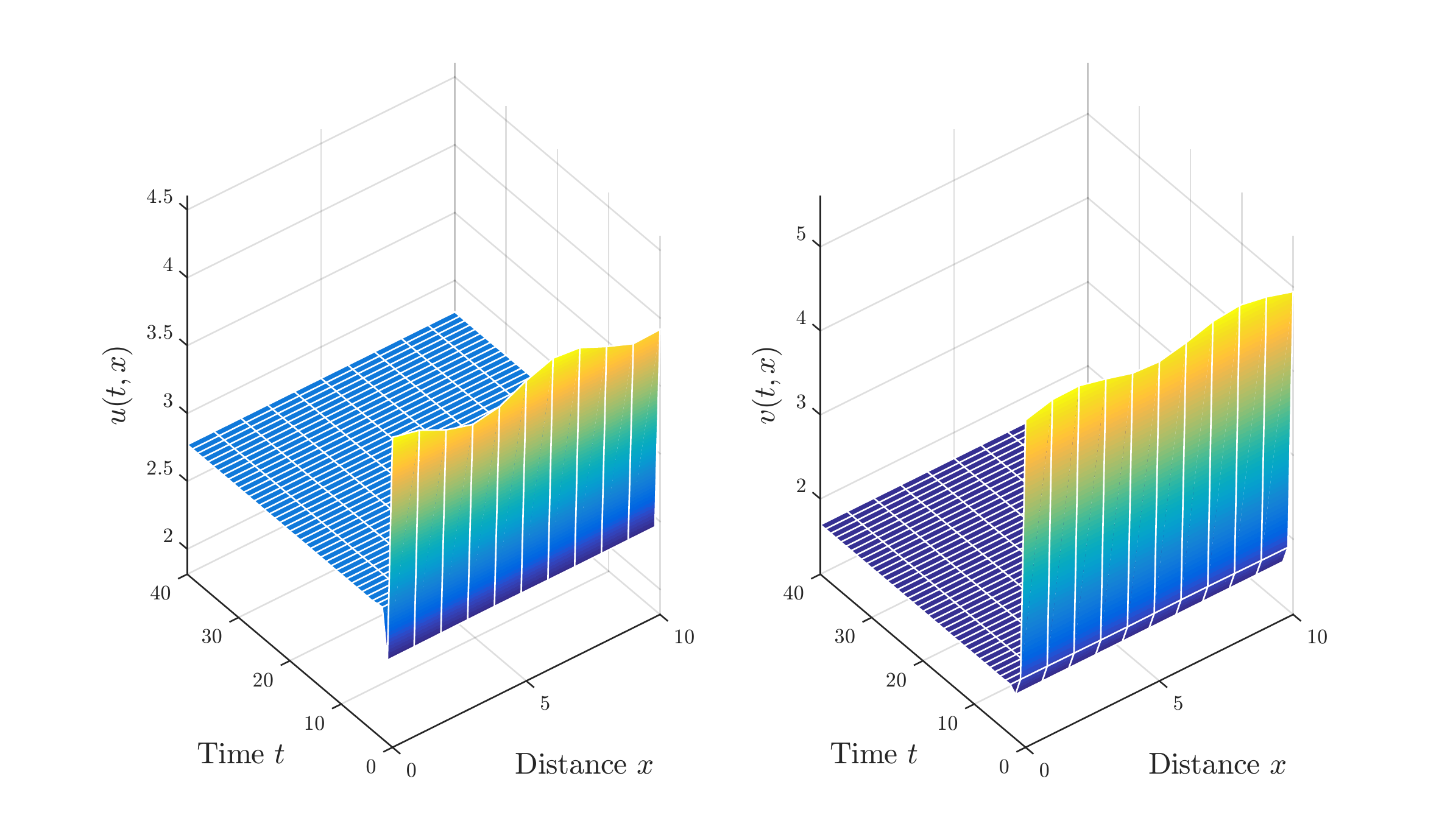}
\caption{Numerical solutions of system (\protect \ref{e1.27}) subject to the
first set of parameters.}
\label{Fig8}
\end{figure}

\begin{figure}[tbph]
\centering
\includegraphics[width=5in]{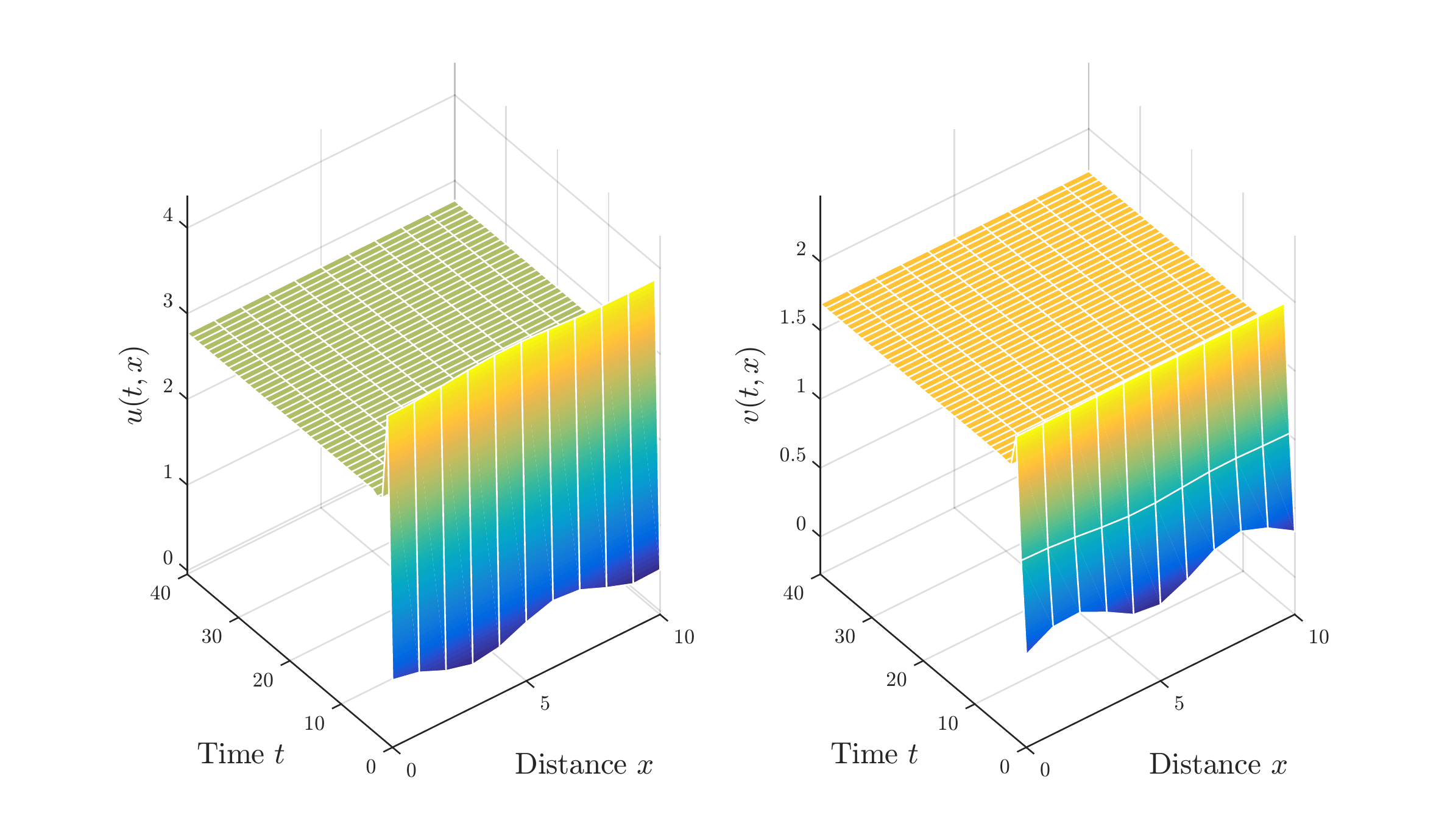}
\caption{Numerical solutions of system (\protect \ref{e1.27}) subject to the
second set of parameters.}
\label{Fig9}
\end{figure}

\begin{figure}[tbph]
\centering
\includegraphics[width=5in]{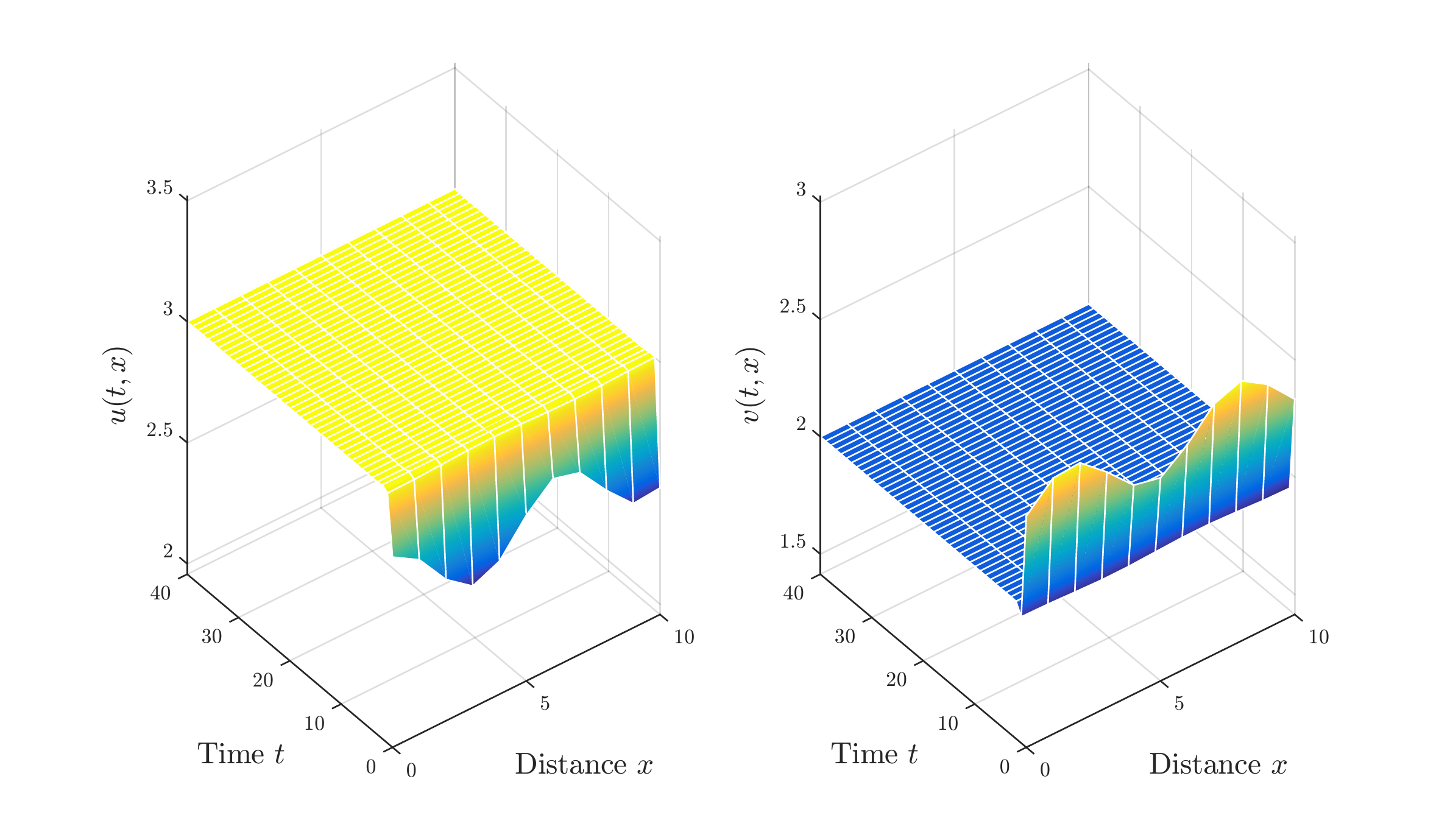}
\caption{Numerical solutions of system (\protect \ref{e1.27}) subject to the
third set of parameters.}
\label{Fig10}
\end{figure}

\begin{figure}[tbph]
\centering
\includegraphics[width=5in]{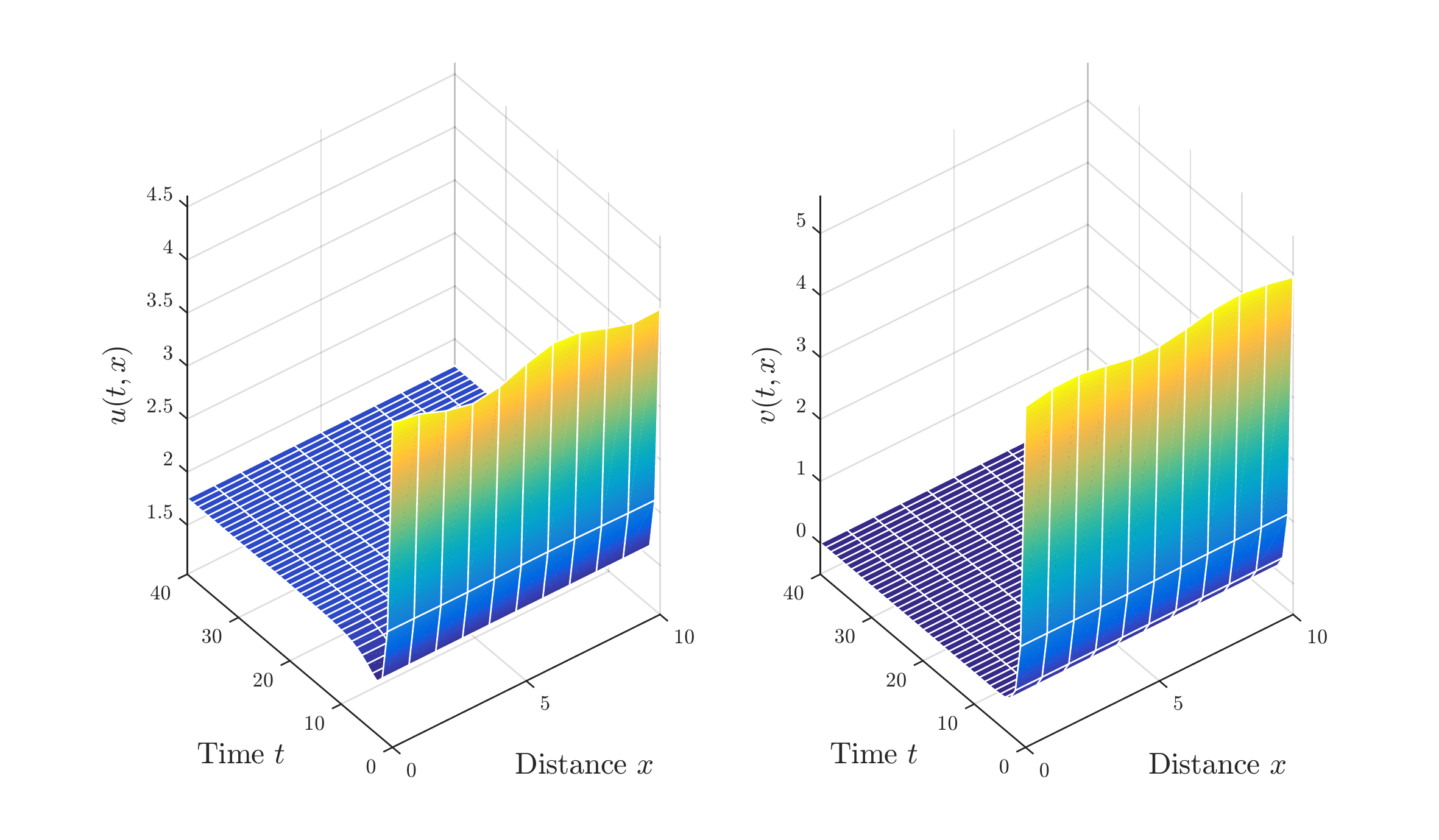}
\caption{Numerical solutions of system (\protect \ref{e1.27}) subject to the
forth set of parameters.}
\label{Fig11}
\end{figure}

\begin{figure}[tbph]
\centering
\includegraphics[width=5in]{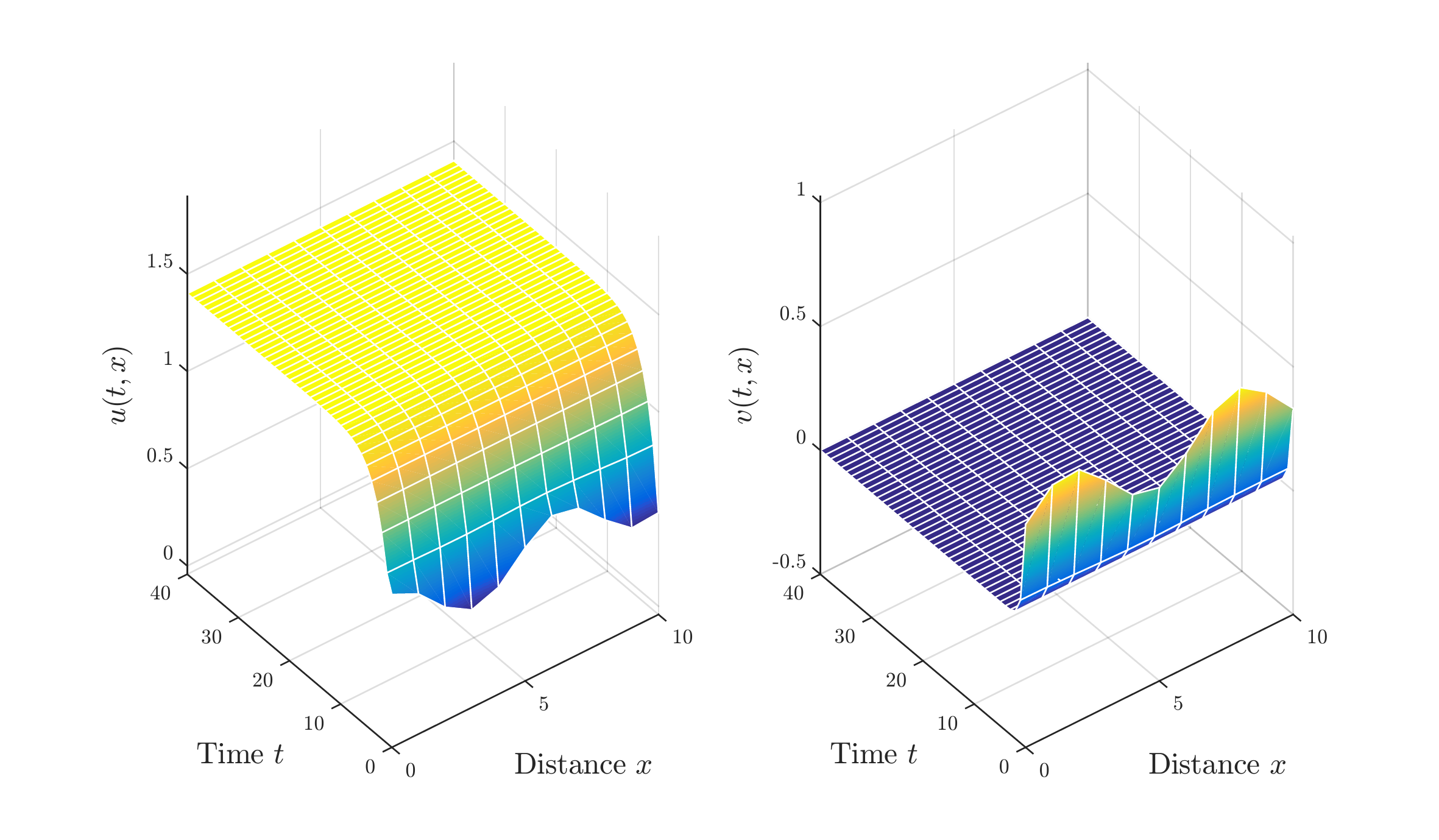}
\caption{Numerical solutions of system (\protect \ref{e1.27}) subject to the
fifth set of parameters.}
\label{Fig12}
\end{figure}

\end{document}